\numberwithin{equation}{section}
\def\lf{\left}
\def\ri{\right}
\def\dbar{\bar\partial}
\def\vv<#1>{\langle#1\rangle}
\def\bi{{\bar i}}
\def\bj{{\bar j}}
\def\bk{{\bar k}}
\def\bl{{\bar l}}
\def\bla{{\bar \lambda}}
\def\bmu{{\bar\mu}}
\def\lam{{\lambda}}
\def\bm{{\bar m}}
\def\XXint#1#2{\setbox0=\hbox{$#1{#2}{\int}$}{#2}\kern-.5\wd0 }
\def\XXint#1#2#3{{\setbox0=\hbox{$#1{#2#3}{\int}$}
     \vcenter{\hbox{$#2#3$}}\kern-.5\wd0}}
\def\vv<#1>{\langle#1\rangle}
\def\ol{\overline}
\def\na{{\nabla}}
\def\cs#1{\left(#1\right)}
\newtheorem{thm}{Theorem}[section]
\newtheorem{lem}{Lemma}[section]
\newtheorem{prop}{Proposition}[section]
\newtheorem{cor}{Corollary}[section]
\theoremstyle{definition}
\newtheorem{defn}{Definition}[section]
\theoremstyle{remark}
\newtheorem{rem}{Remark}[section]
\numberwithin{equation}{section}
\begin{document}

\Year{2016} %
\Month{January}
\Vol{59} %
\No{1} %
\BeginPage{1} %
\EndPage{XX} %
\AuthorMark{Last1 F N {\it et al.}}
\ReceivedDay{November 17, 2014}
\AcceptedDay{January 22, 2015}
\PublishedOnlineDay{; published online January 22, 2016}
\DOI{10.1007/s11425-000-0000-0} 

\title[Curvature Identities on Almost Hermitian Manifolds and Applications]{Curvature Identities on Almost Hermitian Manifolds and Applications}{}


\author[1]{Yu Chengjie}{Corresponding author}

%
\address[{\rm1}]{Department of Mathematics, Shantou University, Shantou, Guangdong {\rm 515063}, China;}
\Emails{ cjyu@stu.edu.cn}\maketitle


 {\begin{center}
\parbox{14.5cm}{\begin{abstract}
 In this paper, we systematically derive the Bianchi identities for the canonical connection on an almost Hermitian manifold. Moreover, we also compute the curvature tensor of the Levi-Civita connection on almost Hermitian manifolds in terms of curvature and torsion of the canonical connection. As applications of the curvature identities, we obtain some results about the integrability of quasi K\"ahler manifolds and nearly K\"ahler manifolds.\vspace{-3mm}
\end{abstract}}\end{center}}

 \keywords{Almost-Hermitian manifold, canonical connection, integrability}

 \MSC{53B25, 53C15}

\renewcommand{\baselinestretch}{1.2}
\begin{center} \renewcommand{\arraystretch}{1.5}
{\begin{tabular}{lp{0.8\textwidth}} \hline \scriptsize
{\bf Citation:}\!\!\!\!&\scriptsize Yu Chengjie. \makeatletter\@titlehead.
Sci China Math, 2016, 59,
 doi:~\@DOI\makeatother\vspace{1mm}
\\
\hline
\end{tabular}}\end{center}

\baselineskip 11pt\parindent=10.8pt  \wuhao
\section{Introduction}
Almost Hermitian manifolds are almost complex manifolds equipped with a Riemannian metric compatible with the almost complex structure. They form the largest class of generalized K\"ahler manifolds. All the other generalized K\"ahler manifolds such as almost K\"ahler, quasi K\"ahler, nearly K\"ahler and semi K\"ahler manifolds are all special almost Hermitian manifolds. Among all these classes of generalized K\"ahler manifolds, almost K\"ahler and nearly K\"ahler manifolds attracted the  most
attentions, because the former one is related to sympletic geometry and the latter one is nowadays related to theoretical physics.

For geometry of almost Hermitian manifolds, nearly K\"ahler manifolds and almost K\"ahler manifolds with respect to the Levi-Civita connection, one can refer to \cite{AD, Goldberg,Gray1,Gray2,Gray3,Gray4,Sekigawa}. There is another connection more related to the almost complex structure than the Levi-Civita connection which generalizes the Chern connection on Hermitian manifolds (see \cite{Chern}). The connection is called the canonical connection for almost Hermitian manifolds which was first introduced by Ehresmann-Libermann\cite{e}. The geometry of almost Hermitian manifolds with respect to the canonical connection received some attentions since the work of Kobayashi \cite{k,k2}. In \cite{twy}, Tossati, Weinkove and Yau used the canonical connection to solve the Calabi-Yau equation on sympletic manifolds with certain positivity on a combination of curvature tensor and torsion of the canonical connection. Their work is related to Donaldson's program \cite{D2,Donaldson}. The canonical connection is also useful in the study of strictly nearly K\"ahler manifolds by Nagy(\cite{Na1,Na2}) and Butruille \cite{B}. The geometry of almost Hermitian manifolds with respect to the canonical connection was also studied by Tossati \cite{vt} and Tam-Fan-Yu \cite{FTY}. The work of Tossati \cite{vt} extended Yau's Schwartz lemma \cite{Yau} to almost Hermitian manifolds. The work of Tam-Fan-Yu \cite{FTY} extended the result of Seshadri-Zheng \cite{SZ} to almost Hermitian manifolds which is also a generalization of a result in \cite{TY}.

In this paper, we first systematically compute the Bianchi identities on almost Hermitian manifolds. Some of the Bianchi identities listed in this paper are hidden in different forms in \cite{Gray2,twy,k,k2}.  Then, by the help of the Bianchi identities, we compute the curvature of the Levi-Civita connection on an almost Hermitian manifold in terms of curvature and torsion of the canonical connection. In \cite{SV}, the authors made a converse computation for quasi K\"ahler manifolds. Indeed, they compute the curvature of the canonical connection in terms of curvature of the Levi-Civita connection for quasi K\"ahler manifolds. Hence, the curvature identities for quasi K\"ahler manifolds we obtained in section 3 are also hidden in a converse form in \cite{SV}. The curvature identities comparing curvature tensors of the canonical connection and the Levi-Civita connection for Hermitian manifolds are also obtained in \cite{YZ}.
\cite{YZ} also provides some interesting examples of generalized K\"ahler manifolds.

Then, with the help of the curvature identities, we obtain the following two integrable results for quasi K\"ahler manifolds.

\begin{thm}
Let $(M,J,g)$ be a quasi K\"ahler manifold. Then
\begin{equation}
S^c\leq S^*
\end{equation}
all over $M$. Moreover, if the equality holds all over $M$, then $R_{i\bj kl}=0$
for all $i,j,k$ and $l$ all over $M$. If the manifold is almost K\"ahler, then it must be K\"ahler when the equality holds all over $M$. Here $S^*$ is the $*$-scalar curvature for the Levi-Civita connection and $S^c$ is the scalar curvature of the canonical connection.
\end{thm}
\begin{thm}Let $(M,J,g)$ be a compact quasi K\"ahler manifold with quasi positive second Ricci curvature and parallel (2,0)-part of the curvature tensor. Then, the manifold must be K\"ahler.
\end{thm}
For the first result above, there are some related discussions on almost K\"ahler manifolds in \cite{SV}. For the second result above, one should note that without any curvature assumption, even for almost K\"ahler manifolds, the vanishing of (2,0)-part of the curvature tensor of the canonical connection does not imply integrability. One can find such kind of examples in \cite{AD}. Moreover, the assumption that the second Ricci curvature is quasi positive can not be relaxed to nonnegative. Indeed, in \cite{SV}, the authors constructed quasi K\"ahler structures on the Iwasawa manifold with vanishing curvature tensor for the canonical connection.

Finally, by the help of the curvature identities, we obtain the following integrability of nearly K\"ahler manifolds.
\begin{thm}
Let $(M,J,g)$ be a nearly K\"ahler manifold. If the Ricci curvature of the canonical connection is positive definite or negative definite at some point, then the manifold must be K\"ahler.
\end{thm}
\begin{thm}
Let $(M^6,J,g)$ be a strictly nearly K\"ahler manifold. Then $R_{i\bj}=0$ for all $i$ and $j$.
\end{thm}

Here, a strictly nearly K\"ahler manifold is a nearly K\"ahler manifold that is not K\"ahler. By the last theorem, it may be natural to view a six dimensional strictly nearly K\"ahler manifold, such as $\mathbb{S}^6$, as an extension of Calabi-Yau manifolds.
Moreover, as a corollary, we reproduce the well-known result of Gray \cite{Gray2} that any six dimensional strictly nearly K\"ahler manifold is Einstein as a Riemannian manifold.

The organization of the remaining parts of the paper is as follows. In Section 2, we derive Bianchi identities on almost Hermitian manifolds and their corollaries for generalized K\"ahler manifolds. In Section 3, we compare the curvature tensors of the canonical connection and the Levi-Civita connection of an almost Hermitian manifold. In Section 4 and Section 5, we obtain some integrability results for quasi K\"ahler manifolds and nearly K\"ahler manifolds respectively.

\section{Bianchi identities on almost Hermitian manifolds}
In this section, we systematically derive the Bianchi identities on almost Hermitian manifolds.

\begin{defn}[\cite{k,k2,g}] Let $(M,J)$ be an almost complex manifold. A Riemannian metric $g$ on $M$ such that $g(JX,JY)=g(X,Y)$ for any two tangent vectors $X$ and $Y$ is called an almost Hermitian metric.  The triple $(M,J,g)$ is called an almost Hermitian manifold. The two form $\omega_g=g(JX,Y)$ is called the fundamental form of the almost Hermitian manifold. A connection $\nabla$ on an almost Hermitian manifold $(M,J,g)$ such that $\nabla g=0$ and $\nabla J=0$ is called an almost Hermitian connection.
\end{defn}

Let $\nabla$ be a connection on the manifold $M$. Recall that the torsion $\tau$ of the connection is a vector-valued two-form defined as
\begin{equation}
\tau(X,Y)=\nabla_XY-\nabla_YX-[X,Y].
\end{equation}
 There are many almost Hermitian connections on an almost Hermitian manifold. However, there is a unique one such that $\tau(X,\overline Y)=0$ for any two $(1,0)$-vectors $X$ and $Y$. Such a notion is first introduced by Ehresman and Libermann \cite{e}.
\begin{defn}[\cite{k,k2}]The unique almost Hermitian connection $\nabla$ on an almost Hermitian manifold $(M,J,g)$ with vanishing $(1,1)$-part of the torsion is called the canonical connection of the almost Hermitian manifold.
\end{defn}

For sake of convenience, in the remaining parts of this paper, we adopt the following conventions:
\begin{enumerate}
\item without further indications, the manifold $M$  is of real dimension $2n$;
\item $D$ denotes the Levi-Civita connection, $R^L$ denotes its curvature tensor, and ',' means taking covariant derivatives with respect to $D$;
\item $\nabla$ denotes the canonical connection,$R$ denotes the curvature tensor of $\nabla$ and ''$;$'' means taking covariant derivatives with respect to $\na$.
\item without further indications, capital English letters such as $A,B,C$  denote indices in $\{1,\bar1,2,\bar 2,\cdots,n,\bar n\}$;
\item without further indications, $i,j,k$ etc. denote indices in $\{1,2,\cdots,n\}$.
\item without further indications, Greek letters such as $\lambda,\mu$ denote summation indices going through $\{1,2,\cdots,n\}$.
\end{enumerate}

Recall that the Nijenhuis tensor for an almost complex manifold is a vector value two-form defined as
\begin{equation}
N(X,Y)=[JX,JY]-J[JX,Y]-J[X,JY]-[X,Y]
\end{equation}
for any tangent vectors $X$ and $Y$.

The following relation of Nijenhuis tensor and torsion is well know.
\begin{lem}[\cite{e,k,k2}]\label{lem-Nijenhui-torsion} Let $(M,g,J)$ be an almost Hermitian manifold, $(e_1,e_2,\cdots,e_n)$ be a local $(1,0)$-frame. Then $N_{ij}^k=N_{i\bj}^\bk=N_{i\bj}^k=0$ and $N_{ij}^\bk=4\tau_{ij}^\bk$
for all $i,j$ and $k$.
\end{lem}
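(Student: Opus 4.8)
The plan is to evaluate the Nijenhuis tensor directly on the complexified $(1,0)$-frame, exploiting the fact that $N$ is a tensor (hence extends $\C$-bilinearly) while $J$ extends $\C$-linearly with $Je_i=\ii e_i$ and $Je_\bi=-\ii e_\bi$. Everything then reduces to bookkeeping of the $(1,0)/(0,1)$ type decomposition of Lie brackets, and the only place a connection enters is the final identification with $\tau$.

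First I would treat two holomorphic frame vectors. Substituting $Je_i=\ii e_i$ and $Je_j=\ii e_j$ into the definition gives
\begin{equation}
N(e_i,e_j)=-2\big([e_i,e_j]+\ii J[e_i,e_j]\big).
\end{equation}
Writing $[e_i,e_j]$ as its $(1,0)$ part $P$ plus its $(0,1)$ part $Q$, one checks $(\id+\ii J)P=0$ and $(\id+\ii J)Q=2Q$, so that $N(e_i,e_j)=-4Q$ is purely of type $(0,1)$. This already yields $N_{ij}^k=0$ together with $N_{ij}^\bk=-4([e_i,e_j])^\bk$, the coefficient of $e_\bk$ in the bracket.

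Next I would treat one vector of each type. Plugging $Je_i=\ii e_i$ and $Je_\bj=-\ii e_\bj$ into the definition, the four terms cancel in pairs and $N(e_i,e_\bj)=0$ identically, giving $N_{i\bj}^k=N_{i\bj}^\bk=0$. The remaining components $N_{\bi\bj}^\bk$, $N_{\bi\bj}^k$, etc.\ follow at once by complex conjugation, since $N$ is real.

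Finally, to convert the bracket coefficient into torsion I would use $\na J=0$: because the canonical connection preserves the $(1,0)$-bundle, both $\na_{e_i}e_j$ and $\na_{e_j}e_i$ are of type $(1,0)$, so the $(0,1)$-part of $\tau(e_i,e_j)=\na_{e_i}e_j-\na_{e_j}e_i-[e_i,e_j]$ is exactly $-([e_i,e_j])^\bk e_\bk$; that is, $\tau_{ij}^\bk=-([e_i,e_j])^\bk$. Comparing with the previous step gives $N_{ij}^\bk=4\tau_{ij}^\bk$. I expect no serious obstacle here: the computation is elementary once the $\C$-linear extension of $J$ is handled correctly, and the main point requiring care is keeping track of which terms are $(1,0)$ and which are $(0,1)$ when applying $\id+\ii J$.
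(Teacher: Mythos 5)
Your proof is correct and complete. One thing to note before comparing: the paper itself offers no proof of this lemma at all --- it is quoted as well known, with citations to Ehresmann--Libermann and Kobayashi --- so there is no in-paper argument to measure your write-up against; what you have written is precisely the standard argument from those sources. All three steps check out: the $\C$-bilinear evaluation gives $N(e_i,e_j)=-2\bigl([e_i,e_j]+\ii J[e_i,e_j]\bigr)=-2(\id+\ii J)[e_i,e_j]$, and since $\id+\ii J$ kills the $(1,0)$-part and doubles the $(0,1)$-part, $N(e_i,e_j)=-4Q$ is purely $(0,1)$, yielding $N_{ij}^k=0$ and $N_{ij}^\bk=-4([e_i,e_j])^\bk$; the mixed-type cancellation $N(e_i,e_\bj)=0$ is immediate; and since $\na J=0$ forces $\na_{e_i}e_j$ and $\na_{e_j}e_i$ to stay in $T^{1,0}$, the $(0,1)$-part of $\tau(e_i,e_j)$ is $-Q$, i.e., $\tau_{ij}^\bk=-([e_i,e_j])^\bk$, which closes the loop. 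One observation worth recording: your last step uses only $\na J=0$, never the defining property $\tau(X,\ol{Y})=0$ of the canonical connection, so the identity $N_{ij}^\bk=4\tau_{ij}^\bk$ (and the vanishing statements, which need no metric and no connection) actually holds for \emph{every} almost Hermitian connection, the canonical one playing no special role here; its uniqueness property only becomes relevant for the later identities in the paper where the $(1,1)$-torsion components are set to zero.
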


Recall the definition of curvature operator:
\begin{equation}
R(X,Y)Z=\nabla_X\nabla_YZ-\nabla_Y\nabla_XZ-\nabla_{[X,Y]}Z.
\end{equation}
The curvature tensor is defined as
\begin{equation}
R(X,Y,Z,W)=\vv<R(Z,W)X,Y>.
\end{equation}
Fixed a unitary $(1,0)$-frame $(e_1,e_2,\cdots,e_n)$, since $\nabla J=0$, we have
\begin{equation}\label{eqn-id-curv}
R_{ijAB}=R_{i\ AB}^{\ \bj}=0
\end{equation}
for all indices $i,j$ and $A,B$. Moreover, similarly as in the Riemannian case, we have the following symmetries of the curvature tensor:
\begin{equation}\label{eqn-curv-sym}
R_{ABCD}=-R_{BACD}=-R_{ABDC}
\end{equation}
for all indices $A,B,C$ and $D$. Recall that $R'_{AB}=g^{\bmu\lam}R_{\lam\bmu AB}$ and  $R''_{i\bj}=g^{\bmu\lam}R_{i\bj\lam\bmu}$ are called the first and the second Ricci curvature of the almost Hermitian metric $g$ respectively.

The following general first and second Bianchi identities can be found in \cite{KN}.
\begin{lem}[First Bianchi identity] \label{lem-1-bian}Let $M$ a smooth manifold and $\nabla$ be an affine connection on $M$ with torsion $\tau$. Then
\begin{equation}
\begin{split}
&R(X,Y)Z+R(Y,Z)X+R(Z,X)Y\\
=&(\nabla_X\tau)(Y,Z)+(\nabla_Y\tau)(Z,X)+(\nabla_Z\tau)(X,Y)-\tau(X,\tau(Y,Z))-\tau(Y,\tau(Z,X))-\tau(Z,\tau(X,Y))
\end{split}
\end{equation}
for any tangent vectors $X,Y$ and $Z$.
\end{lem}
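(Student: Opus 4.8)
The plan is to verify the identity by a direct computation, exploiting that both sides are tensorial in $X,Y,Z$ (each is $C^\infty(M)$-linear in all three arguments), so the equality need only be checked as an identity of vector fields with no worry about localization. I would deliberately \emph{not} pass to a commuting frame, since the Lie brackets and the torsion are precisely the quantities I want to track; instead I would expand everything from the definitions of $R$ and $\tau$ and organize the resulting terms by their differential order.

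First I would take the cyclic sum $\sum_{cyc} R(X,Y)Z$ and split the two second-order pieces $\nabla_X\nabla_Y Z - \nabla_Y\nabla_X Z$ from the first-order piece $-\nabla_{[X,Y]}Z$. The key algebraic move is to regroup the second-order part of the cyclic sum as $\nabla_X(\nabla_Y Z - \nabla_Z Y) + (\text{cyclic})$ and then substitute $\nabla_Y Z - \nabla_Z Y = \tau(Y,Z) + [Y,Z]$. This produces the leading term $\sum_{cyc}\nabla_X(\tau(Y,Z))$ together with a collection of terms of the form $\sum_{cyc}\nabla_X[Y,Z]$.

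Next I would combine $\sum_{cyc}\nabla_X[Y,Z]$ with the surviving first-order piece $-\sum_{cyc}\nabla_{[X,Y]}Z$. Pairing $\nabla_X[Y,Z]$ against $\nabla_{[Y,Z]}X$ and invoking the torsion relation once more in the form $\nabla_X[Y,Z] - \nabla_{[Y,Z]}X = \tau(X,[Y,Z]) + [X,[Y,Z]]$, the double brackets assemble into $\sum_{cyc}[X,[Y,Z]]$, which vanishes by the Jacobi identity. What remains is $\sum_{cyc}\tau(X,[Y,Z])$, so at this stage the cyclic sum of the curvature has been reduced to $\sum_{cyc}\nabla_X(\tau(Y,Z)) + \sum_{cyc}\tau(X,[Y,Z])$.

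Finally I would rewrite $\nabla_X(\tau(Y,Z)) = (\nabla_X\tau)(Y,Z) + \tau(\nabla_X Y, Z) + \tau(Y,\nabla_X Z)$ and expand $[Y,Z] = \nabla_Y Z - \nabla_Z Y - \tau(Y,Z)$ inside $\tau(X,[Y,Z])$. The $-\tau(X,\tau(Y,Z))$ contribution immediately yields the desired $-\sum_{cyc}\tau(X,\tau(Y,Z))$, while the terms carrying a single covariant derivative in one slot of $\tau$ are where the genuine bookkeeping lives; I expect this cancellation to be the main (though entirely routine) obstacle. Using the antisymmetry $\tau(U,V) = -\tau(V,U)$ and matching each term $\tau(\nabla_X Y, Z)$ against its counterpart coming from the bracket expansion, all of these first-order terms cancel in pairs, leaving exactly the stated first Bianchi identity.
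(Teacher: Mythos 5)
Your proof is correct, and the step you flagged as the main bookkeeping obstacle does close up: after writing $\nabla_X(\tau(Y,Z))=(\nabla_X\tau)(Y,Z)+\tau(\nabla_XY,Z)+\tau(Y,\nabla_XZ)$ and expanding $[Y,Z]=\nabla_YZ-\nabla_ZY-\tau(Y,Z)$ inside $\tau(X,[Y,Z])$, the cyclic sum produces twelve single-derivative torsion terms which cancel in pairs exactly as you say --- the three terms of type $\tau(\nabla_XY,Z)$ annihilate the three of type $\tau(Z,\nabla_XY)$ by antisymmetry of $\tau$, and the three of type $\tau(Y,\nabla_XZ)$ cancel directly against the three of type $-\tau(X,\nabla_ZY)$ arising from the bracket expansion --- leaving precisely $\sum_{\mathrm{cyc}}(\nabla_X\tau)(Y,Z)-\sum_{\mathrm{cyc}}\tau(X,\tau(Y,Z))$. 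Be aware, though, that the paper offers no proof of Lemma \ref{lem-1-bian} to compare against: it simply quotes the identity from Kobayashi--Nomizu \cite{KN}. In that reference the identity is obtained on the frame bundle, by exterior differentiation of Cartan's first structure equation (schematically, $d\Theta=\Omega\wedge\theta-\omega\wedge\Theta$) and translation back into tensorial language; your argument is instead the direct vector-field computation from the definitions of $R$ and $\tau$. The structure-equation route packages all of the cancellation you did by hand into the exterior algebra and delivers the second Bianchi identity by the same mechanism (differentiating $d\omega=-\omega\wedge\omega+\Omega$), while your computation is elementary and self-contained, using nothing beyond the Leibniz rule for $\nabla\tau$, antisymmetry of $\tau$, and the Jacobi identity --- entirely appropriate for a statement the paper treats as standard. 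One small point worth stating explicitly in your write-up: your reduction is an identity of vector fields, and the tensoriality you invoke at the outset is what lets you conclude the pointwise statement ``for any tangent vectors'' independently of the chosen extensions; also note your signs agree with the paper's convention since $\tau(\tau(Y,Z),X)=-\tau(X,\tau(Y,Z))$.
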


\begin{lem}[Second Bianchi identity] \label{lem-2-bian}Let $(M,g)$ be a Riemannian manifold, $\nabla$ be an affine connection compatible with the Riemannian metric $g$, and $\tau$ be the torsion of $\nabla$. Then,
\begin{equation}
\begin{split}
&(\nabla_WR)(X,Y,U,V)+(\nabla_UR)(X,Y,V,W)+(\nabla_VR)(X,Y,W,U)\\
=&-R(X,Y,\tau(U,V),W)-R(X,Y,\tau(V,W),U)-R(X,Y,\tau(W,U),V)
\end{split}
\end{equation}
for any tangent vectors $X,Y,U,V,W$.
\end{lem}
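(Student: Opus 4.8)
The plan is to strip the metric off the statement, reduce it to an identity between endomorphism-valued objects, and then derive that identity from the Jacobi identity in the associative algebra of first-order operators. First I would use the hypothesis $\nabla g=0$ together with the convention $R(X,Y,U,V)=\langle R(U,V)X,Y\rangle$ to pass the covariant derivative through the metric pairing, obtaining
\begin{equation*}
(\nabla_WR)(X,Y,U,V)=\langle(\nabla_WR)(U,V)X,Y\rangle,\qquad R(X,Y,\tau(U,V),W)=\langle R(\tau(U,V),W)X,Y\rangle ,
\end{equation*}
where on the right $(\nabla_WR)(U,V)$ and $R(\tau(U,V),W)$ are the endomorphisms obtained by covariantly differentiating and evaluating the operator-valued curvature. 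Since $X$ and $Y$ are arbitrary, the asserted identity becomes equivalent to the endomorphism identity $\mathfrak{S}\,(\nabla_WR)(U,V)=-\mathfrak{S}\,R(\tau(U,V),W)$, where $\mathfrak{S}$ denotes the cyclic sum over $(U,V,W)$ that reproduces the three displayed terms. This reduction is the only place where compatibility with the metric enters; what remains involves the affine connection alone.

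Next I would prove this endomorphism identity starting from the operator relation $[\nabla_A,\nabla_B]=R(A,B)+\nabla_{[A,B]}$, which merely restates the definition of $R$. Applying the Jacobi identity $\mathfrak{S}\,[[\nabla_U,\nabla_V],\nabla_W]=0$ in the associative algebra of operators and substituting this relation, the iterated-bracket pieces collapse: the terms $\nabla_{[[U,V],W]}$ cancel under the cyclic sum by the Jacobi identity for vector fields, while the commutator of the zeroth-order operator $R(U,V)$ with $\nabla_W$ is $[R(U,V),\nabla_W]=-\nabla_W(R(U,V))=-(\nabla_WR)(U,V)-R(\nabla_WU,V)-R(U,\nabla_WV)$. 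Collecting the surviving terms yields the operator identity
\begin{equation*}
\mathfrak{S}\,(\nabla_WR)(U,V)=\mathfrak{S}\,\big[R([U,V],W)-R(\nabla_WU,V)-R(U,\nabla_WV)\big],
\end{equation*}
valid for all vector fields $U,V,W$.

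Finally I would extract the tensorial conclusion. The left-hand side and the combination $\mathfrak{S}\,R(\tau(U,V),W)$ are both tensorial in $(U,V,W)$, so it suffices to verify the claim at an arbitrary point $p$, and there I am free to extend the given tangent vectors to fields $U,V,W$ with $\nabla_AU=\nabla_AV=\nabla_AW=0$ at $p$ for all directions $A$. For such extensions the terms $R(\nabla_WU,V)$ and $R(U,\nabla_WV)$ vanish at $p$, while $[U,V]=\nabla_UV-\nabla_VU-\tau(U,V)$ reduces to $-\tau(U,V)$ at $p$, so $R([U,V],W)|_p=-R(\tau(U,V),W)|_p$. This turns the operator identity into $\mathfrak{S}\,(\nabla_WR)(U,V)=-\mathfrak{S}\,R(\tau(U,V),W)$ at $p$, hence everywhere by tensoriality, and pairing with $X$ and $Y$ recovers the stated form. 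The one point that needs care, and the only real obstacle, is that several intermediate expressions, namely $R([U,V],W)$ and the $R(\nabla_{\bullet}\,\cdot\,,\,\cdot)$ terms, are not separately tensorial in $(U,V,W)$; it is only their cyclic combination that is, and this is exactly what legitimizes evaluating in the adapted frame. Beyond that, the remaining work is keeping the cyclic orientation and the signs consistent with the convention $R(X,Y,U,V)=\langle R(U,V)X,Y\rangle$.
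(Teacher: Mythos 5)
Your proof is correct, but note that the paper itself offers no proof of this lemma to compare against: it simply cites Kobayashi--Nomizu \cite{KN}, where the Bianchi identities for a connection with torsion are derived from the structure equations ($D\Theta=\Omega\wedge\theta$, $D\Omega=0$) on the frame bundle and then translated into tensor form. Your argument replaces that frame-bundle machinery with a computation entirely on the base: the operator relation $[\nabla_U,\nabla_V]=R(U,V)+\nabla_{[U,V]}$, the Jacobi identity for commutators, cancellation of $\nabla_{[[U,V],W]}$ by the vector-field Jacobi identity, and the correct Leibniz expansion $[R(U,V),\nabla_W]=-(\nabla_WR)(U,V)-R(\nabla_WU,V)-R(U,\nabla_WV)$ all check out, as does the final evaluation with extensions satisfying $\nabla U=\nabla V=\nabla W=0$ at $p$ (such extensions exist for an arbitrary affine connection, since one only prescribes the first-order Taylor coefficients of the components against $\Gamma(p)$, so the presence of torsion is no obstruction), where $[U,V]|_p=-\tau(U,V)|_p$ produces exactly the torsion terms with the right sign. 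You are also right that metric compatibility enters only in passing between the endomorphism identity $\mathfrak{S}\,(\nabla_WR)(U,V)=-\mathfrak{S}\,R(\tau(U,V),W)$ and the $(0,4)$-form stated in the lemma, via $(\nabla_WR)(X,Y,U,V)=\langle(\nabla_WR)(U,V)X,Y\rangle$; the endomorphism identity itself is an identity for arbitrary affine connections, which makes your route slightly more general than the statement requires. The trade-off is the expected one: the citation to \cite{KN} buys brevity and a uniform treatment of both Bianchi identities, while your operator-algebra derivation is self-contained, keeps the paper's sign conventions explicit, and makes transparent why only the cyclic combination is tensorial.
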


By directly applying the Bianchi identities above, \eqref{eqn-id-curv} and \eqref{eqn-curv-sym}, we have the following identities. Some of them can also be found in different forms in \cite{k,k2} and \cite{twy}.

\begin{cor}\label{cor-first-bian-2}
Let $(M,J,g)$ be an almost Hermitian manifold and fix a local unitary $(1,0)$ frame. Then
\begin{enumerate}
\item $\tau_{ik;l}^\bj+\tau_{kl;i}^\bj+\tau_{li;k}^\bj=\tau_{i\lam}^\bj\tau_{kl}^\lam+\tau_{k\lam}^\bj\tau_{li}^\lam+\tau_{l\lam}^\bj\tau_{ik}^\lam$;
\item $R_{i\bj k\bar l}-R_{k\bj i\bar l}=\tau^j_{ik;\bar
l}-\tau^{\bar \lambda}_{ik}\tau^j_{\bar l\bar\lambda}$;
\item $R_{i\bj  k\bl }-R_{i \bl k\bj }=\tau^\bi_{\bj\bl;
k}-\tau^\bi_{k\lambda}\tau^{\lambda}_{\bj\bl}$;
\item $R_{i\bj k\bl}-R_{k\bl i\bj}=\tau_{ik;\bj}^l+\tau_{\bj\bl;k}^\bi-\tau_{k\lam}^\bi\tau_{\bj\bl}^\lam-\tau_{\bj\bla}^l\tau_{ik}^\bla$;
\item $R_{i\bj kl}=-\tau_{kl;\bj}^\bi+\tau_{\bj\bla}^\bi\tau_{kl}^\bla$;
\item $R_{i\bj kl}+R_{k\bj li}+R_{l\bj ik}=\tau^j_{ik;l}+\tau^j_{kl;i}+\tau^j_{li;k}-\tau^j_{i\lambda}\tau^\lambda_{kl}-\tau^j_{k\lambda}\tau^\lambda_{li}-\tau^j_{l\lambda}\tau^\lambda_{ik}$;
\item
$R_{i\bj kl;m}+R_{i\bj lm;k}+R_{i\bj mk;l}=\tau_{kl}^\lam R_{i\bj m\lam}+\tau_{lm}^\lam R_{i\bj k\lam}+\tau_{mk}^\lam R_{i\bj l\lam}+\tau_{kl}^\bla R_{i\bj m\bla}+\tau_{lm}^\bla R_{i\bj k\bla}+\tau_{mk}^\bla R_{i\bj l\bla}$;
\item $R_{i\bj k\bl;m}-R_{i\bj m\bl;k}=-R_{i\bj mk;\bl}-\tau_{mk}^\lam R_{i\bj\lam\bl}-\tau_{mk}^\bla R_{i\bj\bla\bl}$;
\item $R_{i\bj k\bl;\bm}-R_{i\bj k\bm;\bl}=-R_{i\bj \bl\bm;k}-\tau_{\bl\bm}^\lam R_{i\bj\lam k}+\tau_{\bl\bm}^\bla R_{i\bj k\bla}$;
\end{enumerate}
\end{cor}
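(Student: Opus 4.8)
The plan is to prove every identity in Corollary \ref{cor-first-bian-2} by a single mechanical recipe: evaluate the coordinate-free first and second Bianchi identities (Lemmas \ref{lem-1-bian} and \ref{lem-2-bian}) on the members of the fixed unitary $(1,0)$-frame and read off one scalar component. Two structural facts do all the work. First, since $\nabla J=0$, the canonical connection preserves the splitting into $(1,0)$- and $(0,1)$-vectors; consequently each operator $R(e_A,e_B)$ sends $(1,0)$-vectors to $(1,0)$-vectors and $(0,1)$-vectors to $(0,1)$-vectors, which is exactly the content of \eqref{eqn-id-curv}, and the same persists after covariant differentiation. Second, the defining property of the canonical connection is that the $(1,1)$-part of $\tau$ vanishes, so $\tau(e_i,e_{\bj})=0$ identically; because the splitting is parallel, every component of $\tau$ and of $\nabla\tau$ carrying one holomorphic and one antiholomorphic form-slot also vanishes. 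Throughout I use that in a unitary frame $g(e_i,e_{\bj})=\delta_{ij}$, so pairing a vector against $e_j$ (resp. $e_{\bj}$) extracts its $e_{\bj}$-component (resp. $e_j$-component).

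The first six identities come from Lemma \ref{lem-1-bian}. For (1) and (6) I take $(X,Y,Z)=(e_i,e_k,e_l)$: the three curvature terms $R(\cdot,\cdot)e_\bullet$ are all $(1,0)$, so extracting the $e_{\bj}$-component (pairing with $e_j$) kills the left side and yields the pure torsion identity (1), whereas extracting the $e_j$-component (pairing with $e_{\bj}$) produces the three curvature components and gives (6). For (2), (3) and (5) one argument is antiholomorphic: $(e_i,e_k,e_{\bl})$ for (2), the conjugate choice $(e_{\bj},e_{\bl},e_k)$ for (3), and $(e_k,e_l,e_{\bj})$ for (5). In each case the two mixed torsions that would appear in the cyclic sums (such as $\tau(e_k,e_{\bl})$) vanish, so only one $\nabla\tau$ term and one $\tau(\cdot,\tau(\cdot,\cdot))$ term survive, and the surviving curvature terms are matched using the antisymmetries $R_{ABCD}=-R_{BACD}=-R_{ABDC}$. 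Finally, (4) is not a fresh substitution: it follows by adding (3) to a relabelled copy of (2), using $\tau_{ik}^{\bl}=-\tau_{ki}^{\bl}$ and $\tau^l_{ik;\bj}=-\tau^l_{ki;\bj}$ to reconcile signs.

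The last three identities come from Lemma \ref{lem-2-bian} with the outer pair frozen as $X=e_i$, $Y=e_{\bj}$ and the operator triple $(U,V,W)$ cycled. For (7) I take $(e_k,e_l,e_m)$; here nothing vanishes, and one merely expands $\tau(e_k,e_l)=\tau_{kl}^\lam e_\lam+\tau_{kl}^{\bla}e_{\bla}$ together with its cyclic partners and applies $R_{i\bj\lam m}=-R_{i\bj m\lam}$ to reach the stated form. For (8) I take $(e_m,e_k,e_{\bl})$ and for (9) I take $(e_k,e_{\bl},e_{\bm})$; now two of the three torsions in the right-hand cyclic sum are of mixed type, hence vanish, leaving a single torsion-curvature term, while on the left the antisymmetry in the last pair (e.g. $R_{i\bj\bl m}=-R_{i\bj m\bl}$) converts the cyclic sum of covariant derivatives into the stated difference plus the third derivative term.

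The calculations themselves are routine, so the only genuine obstacle is bookkeeping discipline: getting the pairing convention right (the Hermitian normalization $g(e_i,e_{\bj})=\delta_{ij}$ is precisely what forces ``pair with $e_j$'' to read off the barred component), tracking the curvature and torsion antisymmetries consistently, and—the single substantive point—justifying that the mixed-type components of $\nabla\tau$ and of $\nabla R$ vanish, not merely those of $\tau$ and $R$. This is where $\nabla J=0$ is essential: it makes the type decomposition parallel, so a tensor whose pure-type piece vanishes keeps a vanishing piece after covariant differentiation. Once this is recorded, each of the nine identities reduces to one substitution followed by reading off a single component.
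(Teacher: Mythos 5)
Your proposal is correct and is essentially the paper's own proof: the paper likewise writes Lemmas \ref{lem-1-bian} and \ref{lem-2-bian} in components for the unitary frame and obtains each identity by one index substitution, killing terms via \eqref{eqn-id-curv} and the vanishing $(1,1)$-torsion (with (3) by conjugating (2) and (4) by combining (2) and (3), just as you do up to relabelling). Your explicit remark that $\nabla J=0$ makes the type decomposition parallel, so mixed components of $\nabla\tau$ vanish and not just those of $\tau$, is a detail the paper leaves implicit but uses in the same way.
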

\begin{proof}
By Lemma \ref{lem-1-bian}, we have
\begin{equation}
R_{C\ AB}^{\ D}+R_{A\ BC}^{\ D}+R_{B\ CA}^{\ D}=\tau_{BC;A}^D+\tau_{CA;B}^D+\tau_{AB;C}^D-\tau_{AE}^D\tau_{BC}^E-\tau_{BE}^D\tau_{CA}^E-\tau_{CE}^D\tau_{AB}^E.
\end{equation}
Letting $C=i,A=k,B=l$ and $D=\bj$, we obtain (1). Letting $C=i,A=k,B=\bl$ and $D=j$, we obtain(2). Taking conjugate of (2), we obtain (3). Subtracting (2) and (3), we obtain (4). Letting $C=\bi,A=k,B=l$ and $D=\bj$, we obtain (5). Letting $C=i,A=k,B=l$ and $D=j$, we obtain (6).

Moreover, by Lemma \ref{lem-2-bian}, we have
\begin{equation}
R_{ABCD;E}+R_{ABDE;C}+R_{ABEC;D}=-\tau_{CD}^FR_{ABFE}-\tau_{DE}^FR_{ABFC}-\tau_{EC}^FR_{ABFD}.
\end{equation}
Letting $A=i,B=\bj,C=k,D=l$ and $E=m$, we obtain (6). Letting $A=i,B=\bj, C=k,D=\bl$ and $E=m$, we obtain (7). Finally, letting $A=i,B=\bj, C=k,D=\bl$ and $E=\bar{m}$, we obtain (8).
\end{proof}
By Lemma \ref{lem-Nijenhui-torsion}, when the complex structure is integrable, we have $\tau_{ij}^\bk=0$. Hence, we have the following identities on Hermitian manifolds.
\begin{cor}\label{cor-Bian-hermitian}
Let $(M,J,g)$ be a Hermitian manifold and fix a local unitary $(1,0)$-frame. Then
\begin{enumerate}
\item $R_{i\bj k\bar l}-R_{k\bj i\bar l}=\tau^j_{ik;\bar
l}$;
\item $R_{i\bj  k\bl }-R_{i \bl k\bj }=\tau^\bi_{\bj\bl;
k}$;
\item $R_{i\bj k\bl}-R_{k\bl i\bj}=\tau_{ik;\bj}^l+\tau_{\bj\bl;k}^\bi$;
\item $R_{i\bj kl}=0$;
\item $\tau^j_{ik;l}+\tau^j_{kl;i}+\tau^j_{li;k}=\tau^j_{i\lambda}\tau^\lambda_{kl}+\tau^j_{k\lambda}\tau^\lambda_{li}+\tau^j_{l\lambda}\tau^\lambda_{ik}$;

\item $R_{i\bj k\bl;m}-R_{i\bj m\bl;k}=-\tau_{mk}^\lam R_{i\bj\lam\bl}$;
\item $R_{i\bj k\bl;\bm}-R_{i\bj k\bm;\bl}=\tau_{\bl\bm}^\bla R_{i\bj k\bla}$.
\end{enumerate}
\end{cor}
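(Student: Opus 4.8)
The plan is to obtain each of the seven identities directly from the corresponding identity in Corollary \ref{cor-first-bian-2} by feeding in integrability. The single structural input I would use is that, by Lemma \ref{lem-Nijenhui-torsion}, a Hermitian manifold has vanishing Nijenhuis tensor and hence $\tau_{ij}^\bk=0$ for all $i,j,k$; conjugating, $\tau_{\bi\bj}^k=0$ as well. Thus in a unitary frame the only surviving torsion components are $\tau_{ij}^k$ and its conjugate, and the entire proof amounts to striking out every term in Corollary \ref{cor-first-bian-2} that carries a factor of type $\tau_{ij}^\bk$ or $\tau_{\bi\bj}^k$.

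First I would dispatch (1), (2) and (3), which come from Corollary \ref{cor-first-bian-2}(2), (3) and (4). In each case the only discrepancy between the general and the Hermitian statement is a quadratic torsion term carrying such a vanishing factor: the term $\tau_{ik}^\bla\tau_{\bl\bla}^j$ in (2), the term $\tau_{k\lam}^\bi\tau_{\bj\bl}^\lam$ in (3), and both terms $\tau_{k\lam}^\bi\tau_{\bj\bl}^\lam$ and $\tau_{\bj\bla}^l\tau_{ik}^\bla$ in (4). Each vanishes under integrability, so the stated simplifications are immediate. For (4), namely $R_{i\bj kl}=0$, I start from Corollary \ref{cor-first-bian-2}(5), $R_{i\bj kl}=-\tau_{kl;\bj}^\bi+\tau_{\bj\bla}^\bi\tau_{kl}^\bla$: the second term drops since $\tau_{kl}^\bla=0$, and the first requires $\tau_{kl;\bj}^\bi=0$. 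This is the one genuine step beyond pure substitution. Here I would argue that the $(0,1)$-valued piece $\tau_{kl}^\bi$ of the torsion vanishes identically as a section, and since the canonical connection satisfies $\na J=0$ it preserves the $(1,0)/(0,1)$ splitting; hence the covariant derivative of an identically-zero type component is again zero, giving $\tau_{kl;\bj}^\bi=0$ and therefore (4).

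Identity (5) then follows formally: by (4) the whole left-hand side $R_{i\bj kl}+R_{k\bj li}+R_{l\bj ik}$ of Corollary \ref{cor-first-bian-2}(6) is zero, and transposing the surviving right-hand side yields (5). For (6) I start from Corollary \ref{cor-first-bian-2}(8), $R_{i\bj k\bl;m}-R_{i\bj m\bl;k}=-R_{i\bj mk;\bl}-\tau_{mk}^\lam R_{i\bj\lam\bl}-\tau_{mk}^\bla R_{i\bj\bla\bl}$; the last term drops since $\tau_{mk}^\bla=0$, and $R_{i\bj mk;\bl}=0$ by the same $\na J=0$ argument applied to $R_{i\bj mk}=0$ (which is (4)), leaving exactly (6). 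For (7) I begin with Corollary \ref{cor-first-bian-2}(9) and first note that conjugating (4) together with the symmetry $R_{ABCD}=-R_{BACD}$ gives $R_{i\bj\bl\bm}=0$, so $R_{i\bj\bl\bm;k}=0$; combined with $\tau_{\bl\bm}^\lam=\ol{\tau_{lm}^\bla}=0$, only the term $\tau_{\bl\bm}^\bla R_{i\bj k\bla}$ survives, which is (7). The only real obstacle is the bookkeeping in the two places where the covariant derivative of an identically-vanishing component must itself be shown to vanish; once that is justified via $\na J=0$, everything else is term-by-term substitution of $\tau_{ij}^\bk=0$ and its conjugate.
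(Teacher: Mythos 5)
Your proposal is correct and takes essentially the same route as the paper, which gives no separate proof of Corollary \ref{cor-Bian-hermitian} but obtains it exactly as you do: substituting $\tau_{ij}^\bk=0$ (from Lemma \ref{lem-Nijenhui-torsion} and integrability) term by term into Corollary \ref{cor-first-bian-2}. Your extra justification that $\tau_{kl;\bj}^\bi$, $R_{i\bj mk;\bl}$ and $R_{i\bj\bl\bm;k}$ vanish---since $\na J=0$ makes the canonical connection preserve the $(1,0)/(0,1)$ splitting, so covariant derivatives of identically vanishing type components are themselves zero---is precisely the step the paper leaves implicit, and it is valid.
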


Recall that an almost Hermitian manifold $(M,J,g)$ is called almost K\"ahler if $d\omega_g=0$ and it is called quasi K\"ahler if $\dbar \omega_g=0$. It was shown in \cite{twy}(see also \cite{k,k2}) that quasi Kahlerity is equivalent to $\tau_{ij}^k=0$ for all $i,j$ and $k$, and almost Kahlerity is equivalent to $\tau_{ij}^k=0$ and $\tau_{ij}^\bk+\tau_{ki}^\bj+\tau_{jk}^\bi=0$ for all $i,j$ and $k$ when a local unitary (1,0)-frame is fixed. Hence, we have the following corollary.
\begin{cor}\label{cor-bian-quasi-kahler}
Let $(M,J,g)$ be a quasi K\"ahler manifold and fix a local unitary (1,0)-frame. Then,
\begin{enumerate}
\item $\tau_{ik;l}^\bj+\tau_{kl;i}^\bj+\tau_{li;k}^\bj=0$;
\item $R_{i\bj k\bar l}-R_{k\bj i\bar l}=-\tau^{\bar \lambda}_{ik}\tau^j_{\bar l\bar\lambda}$;
\item $R_{i\bj  k\bl }-R_{i \bl k\bj }=-\tau^\bi_{k\lambda}\tau^{\lambda}_{\bj\bl}$;
\item $R_{i\bj k\bl}-R_{k\bl i\bj}=-\tau_{k\lam}^\bi\tau_{\bj\bl}^\lam-\tau_{ik}^\bla\tau_{\bj\bla}^l$;
\item $R_{i\bj kl}=-\tau_{kl;\bj}^\bi$;
\item $R_{i\bj kl}+R_{k\bj li}+R_{l\bj ik}=0$;
\item
$R_{i\bj kl;m}+R_{i\bj lm;k}+R_{i\bj mk;l}
=\tau_{kl}^\bla R_{i\bj m\bla}+\tau_{lm}^\bla R_{i\bj k\bla}+\tau_{mk}^\bla R_{i\bj l\bla}$;

\item $R_{i\bj k\bl;m}-R_{i\bj m\bl;k}=-R_{i\bj mk;\bl}-\tau_{mk}^\bla R_{i\bj\bla\bl}$;
\item $R_{i\bj k\bl;\bm}-R_{i\bj k\bm;\bl}=-R_{i\bj \bl\bm;k}-\tau_{\bl\bm}^\lam R_{i\bj\lam k}$.
\end{enumerate}
\end{cor}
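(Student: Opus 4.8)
The plan is to obtain each of the nine identities by specializing the corresponding item of Corollary~\ref{cor-first-bian-2} to the quasi K\"ahler case, where, as recalled immediately before the statement, the defining condition is $\tau_{ij}^k=0$ for all $i,j,k$. Before substituting, I would record two consequences of this hypothesis that are the only inputs the argument needs. First, taking complex conjugates shows $\tau_{\bi\bj}^\bk=0$ as well, so that the only surviving torsion components are the mixed-type ones $\tau_{ij}^\bk$ and their conjugates $\tau_{\bi\bj}^k$. Second, since the vanishing holds identically over all of $M$ and since the canonical connection satisfies $\nabla J=0$ and therefore preserves the bidegree decomposition, every covariant derivative of a vanishing pure-type block again vanishes; concretely $\tau_{ij;A}^k=0$ and $\tau_{\bi\bj;A}^\bk=0$ for every index $A$.

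With these two facts in hand I would run through items (1)--(9) of Corollary~\ref{cor-first-bian-2} in order, discarding every term that carries a factor whose upper index together with both lower indices are all unbarred (or all barred). For instance, in item (1) each product on the right contains a factor of the form $\tau_{kl}^\lam$, $\tau_{li}^\lam$, or $\tau_{ik}^\lam$, all of pure type $\tau_{\cdot\cdot}^k$ and hence zero, which yields the stated identity (1). In item (2) the term $\tau^j_{ik;\bl}$ is the covariant derivative of a vanishing component and drops out, leaving $-\tau_{ik}^\bla\tau^j_{\bl\bla}$; item (3) is handled by the conjugate observation, the term $\tau^\bi_{\bj\bl;k}$ vanishing because $\tau^\bi_{\bj\bl}=0$. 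Items (4)--(9) follow by exactly the same bookkeeping, each time deleting the pure-type torsion factors and their covariant derivatives.

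The crux, such as it is, lies not in any computation but in correctly deciding, for each contracted term, whether the summation index $\lambda$ enters barred or unbarred, and in applying conjugation accurately to the pure-type components; this is precisely what distinguishes the surviving term $\tau_{mk}^\bla R_{i\bj\bla\bl}$ in item (8) from the discarded $\tau_{mk}^\lam R_{i\bj\lam\bl}$. The one point that deserves an explicit remark, rather than presenting any real obstacle, is the vanishing of the covariant derivatives of $\tau_{ij}^k$: this is legitimate only because $\tau_{ij}^k=0$ is a tensorial identity valid on all of $M$ and because $\nabla$ is type preserving, so that the Christoffel corrections in $\tau_{ij;A}^k$ can only involve the very block that is assumed to vanish; a merely pointwise hypothesis would not suffice.
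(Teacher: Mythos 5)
Your proposal is correct and follows essentially the same route as the paper, which proves the corollary simply by substituting the quasi K\"ahler condition $\tau_{ij}^k=0$ (together with its conjugate $\tau_{\bi\bj}^\bk=0$) into the corresponding items of Corollary~\ref{cor-first-bian-2}. Your explicit justification that the covariant derivatives $\tau_{ij;A}^k$ and $\tau_{\bi\bj;A}^\bk$ vanish---because $\nabla J=0$ makes $\nabla$ type-preserving and the vanishing is an identity on all of $M$---is a point the paper leaves implicit, and you have stated it accurately.
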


Recall that an almost Hermitian manifold $(M,J,g)$ is said to be nearly K\"ahler if $(D_XJ)X=0$ for any tangent vector field $X$. The following criterion for nearly K\"ahlerity is well known, see for example \cite{Na1,Na2}.
\begin{lem}\label{lem-criterion-nearly-kahler}
 An almost Hermitian manifold $(M,J,g)$ is nearly K\"ahler if and only if $\tau_{ij}^k=0$ and $\tau_{ij}^\bk=\tau_{jk}^\bi$ for all $i,j$ and $k$ when a local unitary (1,0)-frame is fixed.
\end{lem}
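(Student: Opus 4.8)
The plan is to compare the Levi-Civita connection $D$ with the canonical connection $\na$ and then read the nearly K\"ahler condition off in a unitary $(1,0)$-frame. The starting observation is that $(D_XJ)X=0$ for all real $X$ is, by polarization, equivalent to the skew-symmetry $(D_XJ)Y+(D_YJ)X=0$ for all complexified tangent vectors $X,Y$; so it suffices to test this on pairs taken from a unitary frame $(e_1,\dots,e_n)$ and its conjugate.

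Next I would set $\g=\na-D$ and use $\na J=0$ to get the pointwise identity $(D_XJ)Y=J\g(X,Y)-\g(X,JY)$. Since $\na$ and $D$ are both metric, $g(\g(X,Y),Z)$ is skew in $Y,Z$, and since $D$ is torsion free one has $\g(X,Y)-\g(Y,X)=\tau(X,Y)$; these two facts give the Koszul-type formula $2g(\g(X,Y),Z)=g(\tau(X,Y),Z)-g(\tau(Y,Z),X)+g(\tau(Z,X),Y)$, which recovers $\g$ from $\tau$. Because the canonical connection has vanishing $(1,1)$-torsion, $\tau(e_i,e_\bj)=0$, so only the components $\tau_{ij}^k$, $\tau_{ij}^\bk$ and their conjugates enter, and plugging the frame into the Koszul formula produces explicit expressions for the components of $\g$.

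I would then evaluate $(D_{e_A}J)e_B+(D_{e_B}J)e_A$ on the three essentially distinct index types. For $A=i,B=j$ both of type $(1,0)$ the output is purely of type $(0,1)$, and its vanishing reduces to $\tau_{jk}^\bi+\tau_{ik}^\bj=0$; combined with the skew-symmetry of $\tau_{ij}^\bk$ in $i,j$, this is exactly the stated condition $\tau_{ij}^\bk=\tau_{jk}^\bi$ (equivalently, total antisymmetry of $\tau_{ij}^\bk$ in $i,j,k$). For $A=i$ of type $(1,0)$ and $B=\bj$ of type $(0,1)$ the two summands lie in complementary type subspaces, so each must vanish on its own; a short computation shows this is equivalent to $\tau_{ij}^k=0$. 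The remaining type $((0,1),(0,1))$ yields only the conjugate of the first relation, hence nothing new. Reading the three cases as iff-statements proves both directions at once.

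The computation is entirely first order, so the only real obstacle is bookkeeping: getting the signs in the Koszul formula and in the $J$-eigenvalue substitutions right, noticing that in the mixed case the two terms cannot cancel because they sit in complementary $(1,0)$ and $(0,1)$ subspaces, and checking the small combinatorial fact that skew-symmetry in the first two indices together with $\tau_{jk}^\bi+\tau_{ik}^\bj=0$ upgrades $\tau_{ij}^\bk$ to a fully antisymmetric tensor.
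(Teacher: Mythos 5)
Your proof is correct, and there is in fact nothing in the paper to compare it against: the paper states this lemma as ``well known'' and simply cites \cite{Na1,Na2}, giving no argument of its own. Your route is the natural self-contained one, and it meshes exactly with the paper's toolkit: the Koszul-type formula you derive for $\gamma=\nabla-D$ from metricity of both connections and torsion-freeness of $D$ is precisely the paper's Lemma \ref{lem-comp-connection} (stated there for $D$ in terms of $\nabla$ and $\tau$), and your frame evaluations are the same computations that produce \eqref{eqn-com-con-1} and \eqref{eqn-com-con-2} in the proof of Theorem \ref{thm-curv-1}. I verified the three cases. For $X=e_i$, $Y=e_j$ one gets $(D_{e_i}J)e_j=-2\ii\,\gamma^{(0,1)}(e_i,e_j)$, and your Koszul formula together with $\tau_{ij}^\bk=-\tau_{ji}^\bk$ gives
\begin{equation*}
\bigl\langle \gamma(e_i,e_j)+\gamma(e_j,e_i),\,e_k\bigr\rangle=-\bigl(\tau_{jk}^\bi+\tau_{ik}^\bj\bigr),
\end{equation*}
so vanishing of the symmetric part is antisymmetry of $\tau_{ij}^\bk$ in the first and third slots; combined with antisymmetry in the first two slots this is total antisymmetry, which is indeed equivalent to the cyclic condition $\tau_{ij}^\bk=\tau_{jk}^\bi$, as you claim. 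For $X=e_i$, $Y=\ol{e_j}$, the vanishing $(1,1)$-torsion of the canonical connection kills all but one term in each Koszul expansion, leaving $(D_{e_i}J)\ol{e_j}=-\ii\sum_k\tau_{\bj\bk}^\bi e_k$ and a $(0,1)$-valued expression built from $\tau_{ik}^j$ for the other summand; your observation that the two summands lie in complementary type subspaces and hence cannot cancel is sound, and each vanishes for all indices iff $\tau_{ij}^k=0$. The conjugate case is redundant, and since every reduction is an equivalence, both directions of the lemma follow at once. The only point worth making explicit in a final write-up is the tensoriality of $(X,Y)\mapsto(D_XJ)Y$, which is what licenses both the polarization and the passage to the complexified frame.
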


It turns out that the torsion for a nearly K\"ahler manifold must be parallel. This fact was first shown by Kirichenko \cite{Ki} and was crucial for the study of nearly K\"ahler manifolds in \cite{Na1,Na2}. We give a proof of this fact using the curvature identities we have derived in Corollary \ref{cor-bian-quasi-kahler}.

\begin{thm}[Kirichenko]\label{thm-para-tor}
Let $(M,J,g)$ be a nearly K\"ahler manifold. Then $\nabla\tau=0$.
\end{thm}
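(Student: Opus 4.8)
The plan is to show that the covariant derivatives $\tau^{\bar k}_{ij;l}$ and $\tau^{\bar k}_{ij;\bar l}$ both vanish, which together with $\tau^k_{ij}=0$ (nearly K\"ahler) will give $\nabla\tau=0$. The starting point is the defining symmetry from Lemma \ref{lem-criterion-nearly-kahler}: for a nearly K\"ahler manifold, $\tau^k_{ij}=0$ and $\tau^{\bar k}_{ij}=\tau^{\bar i}_{jk}=\tau^{\bar j}_{ki}$, i.e.\ the torsion $\tau^{\bar k}_{ij}$ is totally skew-symmetric in the three indices $i,j,k$ after lowering. The idea is to feed this total antisymmetry into the relevant first Bianchi identity and extract strong constraints on the first covariant derivatives of $\tau$.

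First I would specialize Corollary \ref{cor-bian-quasi-kahler}(1), which holds since nearly K\"ahler implies quasi K\"ahler: $\tau^{\bar j}_{ik;l}+\tau^{\bar j}_{kl;i}+\tau^{\bar j}_{li;k}=0$. Because $\tau^{\bar j}_{ik}$ is totally antisymmetric in its lower indices and the upper index, covariant differentiation in the $\na$-unitary frame preserves this type, so $\tau^{\bar j}_{ik;l}$ is antisymmetric in $i,k$. Next, differentiating the symmetry relation $\tau^{\bar k}_{ij}=\tau^{\bar i}_{jk}$ covariantly gives identities such as $\tau^{\bar j}_{ik;l}=\tau^{\bar i}_{kj;l}$ that let me permute the upper index into the lower slots. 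Combining the cyclic Bianchi relation with these index symmetries, the plan is to derive that the totally skew part of $\tau^{\bar j}_{ik;l}$ in all four indices vanishes and then that the remaining components vanish too, yielding $\tau^{\bar j}_{ik;l}=0$. The antiholomorphic derivative $\tau^{\bar j}_{ik;\bar l}$ I would control using identity (5) of Corollary \ref{cor-bian-quasi-kahler}, namely $R_{i\bar j k l}=-\tau^{\bar i}_{kl;\bar j}$: the total skew-symmetry of $\tau$ forces $R_{i\bar j kl}$ to enjoy extra symmetry in the holomorphic indices, and comparing with the curvature symmetries in \eqref{eqn-id-curv} and Corollary \ref{cor-first-bian-2}(6) pins down $\tau^{\bar i}_{kl;\bar j}$.

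The main obstacle I expect is the bookkeeping of the interplay between two different symmetries: the total antisymmetry of $\tau^{\bar k}_{ij}$ coming from the nearly K\"ahler condition, and the antisymmetry $R_{ABCD}=-R_{ABDC}$ of the curvature. One must be careful that covariant differentiation with respect to $\nabla$ commutes with raising and lowering indices only because the frame is $\nabla$-unitary and $\nabla g=0$, $\nabla J=0$; this is what guarantees that $\tau^{\bar k}_{ij;l}$ keeps the type and the antisymmetry of $\tau^{\bar k}_{ij}$. The delicate point is to combine the cyclic sum in Corollary \ref{cor-bian-quasi-kahler}(1) with the permutation identities in such a way that no nonzero component survives; a single sign error there collapses the argument, so I would set up all the symmetries explicitly before summing. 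Once $\tau^{\bar j}_{ik;l}=0$ and $\tau^{\bar j}_{ik;\bar l}=0$ are established, taking conjugates handles $\tau^i_{\bar j\bar k}$ and its derivatives, and $\nabla\tau=0$ follows.
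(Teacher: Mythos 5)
Your proposal is correct and follows essentially the same route as the paper's own proof: you handle $\tau^{\bar k}_{ij;\bar l}$ by combining $R_{i\bar j kl}=-\tau^{\bar i}_{kl;\bar j}$ (Corollary \ref{cor-bian-quasi-kahler}(5)) with the cyclic vanishing of $R_{i\bar j kl}$ (Corollary \ref{cor-bian-quasi-kahler}(6)), and $\tau^{\bar k}_{ij;l}$ by feeding the total skew-symmetry of the torsion, which $\nabla$ preserves, into the cyclic identity of Corollary \ref{cor-bian-quasi-kahler}(1) --- exactly the paper's two steps. The only difference is presentational: the paper executes the index chase explicitly (first deriving $\tau^{\bar l}_{ij;k}=-\tau^{\bar k}_{ij;l}$, hence skewness in all four indices, then a factor-$3$ collapse), while you leave it as a plan; the symmetries you set up do force the vanishing, so there is no gap.
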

\begin{proof}
Fix a local unitary (1,0)-frame, by Lemma \ref{lem-criterion-nearly-kahler} we only need to show that $\tau_{ij;l}^\bk=\tau_{ij;\bl}^\bk=0$ for all $i,j,k$ and $l$.

By Lemma \ref{lem-criterion-nearly-kahler} and (5) in  Corollary \ref{cor-bian-quasi-kahler}, we know that
\begin{equation}
R_{i\bj kl}=-\tau_{kl;\bj}^\bi.
\end{equation}
Substituting this into (6) of Corollary \ref{cor-bian-quasi-kahler} and using Lemma \ref{lem-criterion-nearly-kahler}, we have
\begin{equation}
3\tau_{kl;\bj}^\bi=\tau_{kl;\bj}^\bi+\tau_{li;\bj}^\bk+\tau_{ik;\bj}^\bl=0.
\end{equation}
So $\tau_{ij;\bl}^\bk=0$ for all $i,j,k$ and $l$.

On the other hand, by (1) in Corollary \ref{cor-bian-quasi-kahler} and Lemma \ref{lem-criterion-nearly-kahler},
\begin{equation}
\begin{split}
\tau_{ij;k}^\bl=-\tau_{jk;i}^\bl-\tau_{ki;j}^\bl=-\tau_{lj;i}^\bk-\tau_{il;j}^\bk=\tau_{ji;l}^\bk=-\tau_{ij;l}^\bk.
\end{split}
\end{equation}
Therefore,
\begin{equation}
2\tau_{ij;l}^\bk=\tau_{ij;l}^\bk-\tau_{ij;k}^\bl=\tau_{jk;l}^\bi-\tau_{jl;k}^\bi=-\tau_{kj;l}^\bi-\tau_{jl;k}^\bi=\tau_{lk;j}^\bi.
\end{equation}
So, by the last identity,
\begin{equation}
\tau_{kl;j}^\bi=-2\tau_{ij;l}^\bk=4\tau_{kl;j}^\bi.
\end{equation}
and $\tau_{kl;j}^\bi=0$ for all $i,j, k$ and $l$.
\end{proof}

By Lemma \ref{lem-criterion-nearly-kahler} and Theorem \ref{thm-para-tor}, we have the following Bianchi identities for nearly K\"ahler manifolds.
\begin{cor}\label{cor-bian-nearly-kahler}
Let $(M,J,g)$ be a nearly manifold and fix a local unitary $(1,0)$-frame. Then,
\begin{enumerate}
\item $R_{i\bj k\bar l}-R_{k\bj i\bar l}=-\tau^{\bar \lambda}_{ik}\tau^\lam_{\bj\bar l}$;
\item $R_{i\bj  k\bl }-R_{i \bl k\bj }=-\tau^{\bar \lambda}_{ik}\tau^\lam_{\bj\bar l}$;
\item $R_{i\bj k\bl}-R_{k\bl i\bj}=0$;
\item $R'_{i\bj}=R''_{i\bj}$;
\item $R_{i\bj kl}=0$;
\item
$\tau_{kl}^\bla R_{i\bj m\bla}+\tau_{lm}^\bla R_{i\bj k\bla}+\tau_{mk}^\bla R_{i\bj l\bla}=0$;

\item $R_{i\bj k\bl;m}-R_{i\bj m\bl;k}=0$;
\item $R_{i\bj k\bl;\bm}-R_{i\bj k\bm;\bl}=0$.
\end{enumerate}
\end{cor}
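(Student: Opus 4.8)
The plan is to read off each identity from the matching quasi K\"ahler identity in Corollary~\ref{cor-bian-quasi-kahler}, since a nearly K\"ahler manifold is in particular quasi K\"ahler, and then to simplify the right-hand sides using the two extra structural facts available here: the cyclic symmetry $\tau_{ij}^\bk=\tau_{jk}^\bi$ of Lemma~\ref{lem-criterion-nearly-kahler} and the parallelism $\nabla\tau=0$ of Theorem~\ref{thm-para-tor}. Throughout I would work in a fixed unitary $(1,0)$-frame so that raising and lowering indices is trivial.

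The algebraic heart of the argument is a short list of torsion identities. Combining the cyclic relation $\tau_{ij}^\bk=\tau_{jk}^\bi$ with the skew-symmetry $\tau_{ij}^\bk=-\tau_{ji}^\bk$ gives $\tau^\bi_{k\lam}=\tau^{\bla}_{ik}$, while conjugating the cyclic relation gives $\tau^l_{\bj\bla}=-\tau^\lam_{\bj\bl}$ and $\tau^j_{\bl\bla}=\tau^\lam_{\bj\bl}$. Feeding the first and third of these into (2) and (3) of Corollary~\ref{cor-bian-quasi-kahler} rewrites both right-hand sides as $-\tau^{\bla}_{ik}\tau^\lam_{\bj\bl}$, which is precisely (1) and (2). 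For (3) I would substitute into (4) of Corollary~\ref{cor-bian-quasi-kahler}: the relation $\tau^\bi_{k\lam}=\tau^{\bla}_{ik}$ turns its first quadratic term into $-\tau^{\bla}_{ik}\tau^\lam_{\bj\bl}$, and $\tau^l_{\bj\bla}=-\tau^\lam_{\bj\bl}$ turns its second into $+\tau^{\bla}_{ik}\tau^\lam_{\bj\bl}$, so the two cancel and $R_{i\bj k\bl}=R_{k\bl i\bj}$. Tracing this pair-symmetry over the unitary frame then yields $R'_{i\bj}=R''_{i\bj}$, which is (4).

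Identities (5) and (6) are essentially free. Item (5) of Corollary~\ref{cor-bian-quasi-kahler} reads $R_{i\bj kl}=-\tau_{kl;\bj}^\bi$, which vanishes by $\nabla\tau=0$, giving (5); and since $R_{i\bj kl}\equiv 0$ on all of $M$ its covariant derivatives vanish as well, so the left-hand side of (6) of Corollary~\ref{cor-bian-quasi-kahler} is zero, which is exactly (6).

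The step I expect to be the main obstacle is (7) and (8), the two second-Bianchi identities, whose right-hand sides in Corollary~\ref{cor-bian-quasi-kahler} still carry curvature terms; the task is to show each of these vanishes. The terms $R_{i\bj mk;\bl}$ and $R_{i\bj\lam k}$ have both of their last two indices holomorphic and so vanish by (5) (and the differentiated version of (5)). The genuinely new ingredient is that curvature with both of its last two indices anti-holomorphic also vanishes: conjugating (5) gives $R_{\bi j\bk\bl}=0$, and the skew-symmetry $R_{ABCD}=-R_{BACD}$ then yields $R_{i\bj\bk\bl}=0$ after relabeling. With $R_{i\bj\bk\bl}=0$ in hand, the remaining terms $\tau_{mk}^\bla R_{i\bj\bla\bl}$ in (7) and $R_{i\bj\bl\bm;k}$ in (8) both drop out, and the two identities follow. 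The only real care needed anywhere lies in the index bookkeeping of the second paragraph and in combining conjugation with the curvature symmetry to establish $R_{i\bj\bk\bl}=0$.
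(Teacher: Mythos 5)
Your proposal is correct and follows exactly the route the paper intends: it specializes Corollary~\ref{cor-bian-quasi-kahler} to the nearly K\"ahler case using the torsion symmetry $\tau_{ij}^\bk=\tau_{jk}^\bi$ of Lemma~\ref{lem-criterion-nearly-kahler} and the parallelism $\nabla\tau=0$ of Theorem~\ref{thm-para-tor}, which is precisely what the paper invokes (without spelling out the index manipulations you supply). Your torsion identities and the observation $R_{i\bj\bk\bl}=0$ (from conjugating item (5) and using skew-symmetry in the first pair) all check out, so items (1)--(8) follow as you describe.
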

\begin{rem}
(1),(2),(7),(8) in different forms can be also found in\cite{Vezzoni2}.
\end{rem}
Since the fist and second Ricci curvature tensors coincide for nearly K\"ahler manifolds, we simply denote them as $R_{i\bj}$ for nearly K\"ahler manifolds.
\section{Curvatures of Levi-Civita and Canonical Connections}
In this section, we compare the curvature tensor $R^L$ of the Levi-Civita connection $D$ and the curvature tensor $R$ of the canonical connection $\na$ on an almost Hermitian manifold $(M,J,g)$.

Recall the following comparison of Levi-Civita connection and canonical connection on almost Hermitian manifolds. One can find in \cite{g} and in \cite{FTY} for a proof.
\begin{lem}\label{lem-comp-connection}
$$\vv<D_YX,Z>=\vv<\nabla_Y X,Z>+\frac{1}{2}\lf(\vv<\tau(X,Y),Z>+\vv<\tau(Y,Z),X>-\vv<\tau(Z,X),Y>\ri)$$
for any tangent vector fields $X,Y$ and $Z$.
\end{lem}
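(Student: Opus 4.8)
The plan is to recognize this as the standard computation of the difference tensor between two metric connections, using nothing about the almost Hermitian structure beyond the facts that $D$ and $\nabla$ both preserve $g$ while $D$ is torsion-free and $\nabla$ has torsion $\tau$; the almost complex structure $J$ plays no role. First I would introduce the difference tensor
\[
A(Y,X):=D_YX-\nabla_YX,
\]
which is genuinely tensorial because the inhomogeneous (derivative) parts of the two connection operators cancel, and restate the claim as an explicit formula for $\vv<A(Y,X),Z>$.

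Two structural identities drive the argument. Since $D$ is torsion-free we have $D_YX-D_XY=[Y,X]$, while the torsion of $\nabla$ gives $\nabla_YX-\nabla_XY=\tau(Y,X)+[Y,X]$; subtracting these and using antisymmetry of $\tau$ yields the torsion relation in the first two slots,
\[
A(Y,X)-A(X,Y)=\tau(X,Y).
\]
Next, since both $D$ and $\nabla$ are compatible with $g$, expanding $Y\vv<X,Z>$ with each connection and subtracting gives $\vv<A(Y,X),Z>+\vv<X,A(Y,Z)>=0$, so that $\vv<A(Y,X),Z>$ is skew in its last two arguments,
\[
\vv<A(Y,X),Z>=-\vv<A(Y,Z),X>.
\]

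With these two relations the final step is the usual Koszul-type cyclic manipulation: starting from $\vv<A(Y,X),Z>$ I alternately apply the skew-symmetry in the last two arguments and the torsion relation in the first two, cycling through all three arguments until the expression returns to $\vv<A(Y,X),Z>$ with an overall sign. This produces
\[
2\vv<A(Y,X),Z>=-\vv<\tau(Y,X),Z>+\vv<\tau(X,Z),Y>-\vv<\tau(Z,Y),X>,
\]
and rewriting each term with the antisymmetry $\tau(P,Q)=-\tau(Q,P)$ converts the right-hand side into $\vv<\tau(X,Y),Z>+\vv<\tau(Y,Z),X>-\vv<\tau(Z,X),Y>$. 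Substituting back into $\vv<D_YX,Z>=\vv<\nabla_YX,Z>+\vv<A(Y,X),Z>$ gives precisely the stated formula. The only delicate part is bookkeeping the six sign changes in the cyclic chain; there is no genuine obstacle, since once the two structural identities are in place the system is linear and uniquely determines $A$.
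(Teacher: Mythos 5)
Your argument is correct, and the signs all check out under the paper's convention $\tau(X,Y)=\nabla_XY-\nabla_YX-[X,Y]$: the two structural identities $A(Y,X)-A(X,Y)=\tau(X,Y)$ and $\langle A(Y,X),Z\rangle=-\langle A(Y,Z),X\rangle$ do force, via the six-step cyclic chain, $2\langle A(Y,X),Z\rangle=\langle\tau(X,Y),Z\rangle+\langle\tau(Y,Z),X\rangle-\langle\tau(Z,X),Y\rangle$, which is the stated formula. One caveat about the comparison itself: the paper does not prove this lemma --- it explicitly defers to \cite{g} and \cite{FTY} --- so there is no internal proof to measure your attempt against; your difference-tensor, Koszul-type derivation is precisely the standard argument those references contain, and you are right that it uses only that both connections preserve $g$, that $D$ is torsion-free, and that $\nabla$ has torsion $\tau$, with the almost complex structure playing no role.
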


\begin{thm}\label{thm-curv-1}
Let $(M,J,g)$ be an almost Hermitian manifold and fix a local unitary $(1,0)$-frame. Then,
\begin{equation}
\begin{split}
&R^L_{i\bj k\bl}\\
=&\frac{1}{2}(R_{i\bl k\bj}+R_{k\bj i\bl})-\frac{1}{4}(\tau_{\bl\bla}^\bi\tau_{ k\lam}^j+\tau_{i\lam}^l\tau_{\bj\bla}^\bk-\tau_{ik}^\lam\tau_{\bj\bl}^\bla)-\frac{1}{2}(\tau_{k\lam}^\bi\tau_{\bj\bl}^\lam+\tau_{ik}^\bla\tau_{\bl\bla}^j)+\frac{1}{4}(\tau_{ik}^\bla+\tau_{k\lam}^\bi-\tau_{\lam i}^\bk)(\tau_{\bj\bl}^\lam+\tau_{\bl\bla}^j-\tau_{\bla\bj}^l).\\
\end{split}
\end{equation}
\end{thm}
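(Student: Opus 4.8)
The plan is to write the Levi-Civita connection as $D=\na+A$, where the difference tensor $A$ is read off from Lemma \ref{lem-comp-connection}, and then expand the curvature of $D$ in terms of the curvature of $\na$, the covariant derivatives $\na A$, and the quadratic expression $A\cdot A$. Concretely, for two connections related by $D_XY=\na_XY+A(X,Y)$ with $A$ tensorial, a direct expansion of $R^L(X,Y)Z=D_XD_YZ-D_YD_XZ-D_{[X,Y]}Z$ gives
\begin{equation*}
R^L(X,Y)Z=R(X,Y)Z+(\na_XA)(Y,Z)-(\na_YA)(X,Z)+A(\tau(X,Y),Z)+A(X,A(Y,Z))-A(Y,A(X,Z)),
\end{equation*}
the extra term $A(\tau(X,Y),Z)$ appearing because $\na$ has torsion. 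I would specialize this to $X=e_k$, $Y=e_\bl$, $Z=e_i$ and pair with $e_\bj$. The crucial simplification is that $\tau(e_k,e_\bl)=0$, since the canonical connection has vanishing $(1,1)$-part of the torsion; hence $A(\tau(e_k,e_\bl),e_i)$ disappears and
\begin{equation*}
R^L_{i\bj k\bl}=R_{i\bj k\bl}+\vv<(\na_{e_k}A)(e_\bl,e_i),e_\bj>-\vv<(\na_{e_\bl}A)(e_k,e_i),e_\bj>+\vv<A(e_k,A(e_\bl,e_i))-A(e_\bl,A(e_k,e_i)),e_\bj>.
\end{equation*}

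Next I would compute the needed components of $A$ in the unitary frame. Using Lemma \ref{lem-comp-connection} together with $g_{i\bj}=\delta_{ij}$ and $\tau(e_i,e_\bj)=0$, most of the three torsion terms defining $A$ vanish, and one finds for instance $\vv<A(e_\bl,e_i),e_\bj>=\frac12\tau_{\bl\bj}^\bi$ and $\vv<A(e_k,e_i),e_\bj>=\frac12\tau_{ik}^j$, together with the analogous expressions for the remaining components $\vv<A(\cdot,\cdot),e_m>$ and $\vv<A(\cdot,\cdot),e_\bm>$ that feed the quadratic terms. In a unitary frame the $e_\bj$-component of $(\na_{e_k}A)(e_\bl,e_i)$ is just the covariant derivative of the corresponding component of $A$, so the two linear terms become $\frac12\tau_{\bl\bj;k}^\bi$ and $-\frac12\tau_{ik;\bl}^j$.

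The key structural point is that the asserted formula contains no covariant derivatives of torsion, so these must be traded for curvature. I would do this with the first Bianchi identities (2) and (3) of Corollary \ref{cor-first-bian-2}. Writing $\tau_{\bl\bj;k}^\bi=-\tau_{\bj\bl;k}^\bi$ and applying (3) gives $\frac12\tau_{\bl\bj;k}^\bi=-\frac12R_{i\bj k\bl}+\frac12R_{i\bl k\bj}-\frac12\tau_{k\lam}^\bi\tau_{\bj\bl}^\lam$, while (2) gives $-\frac12\tau_{ik;\bl}^j=-\frac12R_{i\bj k\bl}+\frac12R_{k\bj i\bl}-\frac12\tau_{ik}^\bla\tau_{\bl\bla}^j$. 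Adding these to the leading $R_{i\bj k\bl}$, the three copies of $R_{i\bj k\bl}$ cancel ($1-\frac12-\frac12=0$) and leave exactly $\frac12(R_{i\bl k\bj}+R_{k\bj i\bl})-\frac12(\tau_{k\lam}^\bi\tau_{\bj\bl}^\lam+\tau_{ik}^\bla\tau_{\bl\bla}^j)$, reproducing the first and third groups of terms in the claimed identity.

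It then remains to evaluate the quadratic contribution $\vv<A(e_k,A(e_\bl,e_i))-A(e_\bl,A(e_k,e_i)),e_\bj>$. Here I would expand $A(e_\bl,e_i)$ and $A(e_k,e_i)$ into their frame components, substitute into the outer $A$, and collect by index type. This should yield the two remaining torsion groups, namely $-\frac14(\tau_{\bl\bla}^\bi\tau_{k\lam}^j+\tau_{i\lam}^l\tau_{\bj\bla}^\bk-\tau_{ik}^\lam\tau_{\bj\bl}^\bla)$ together with the ``square'' term $\frac14(\tau_{ik}^\bla+\tau_{k\lam}^\bi-\tau_{\lam i}^\bk)(\tau_{\bj\bl}^\lam+\tau_{\bl\bla}^j-\tau_{\bla\bj}^l)$, the latter being precisely the reassembled contraction of the two difference tensors. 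This last step --- carefully tracking every index type through the bilinear $A\cdot A$ and grouping the resulting torsion products into the stated symmetric combination --- is the main computational obstacle; everything preceding it is conceptually routine once the torsion-of-$\na$ term is seen to vanish and the Bianchi identities are invoked to remove the $\na\tau$ terms.
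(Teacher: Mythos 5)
Your proposal is correct, and it reaches the identity by a genuinely different organization than the paper's proof, although the computational ingredients coincide. The paper works pointwise in a local pseudo holomorphic normal frame at $p$ (taken from \cite{Yu2}), using $\na e_i(p)=0$, $g_{i\bj}(p)=\delta_{ij}$ and $[e_k,\ol{e_l}](p)=0$ to expand $\vv<D_{e_k}D_{\ol{e_l}}e_i,\ol{e_j}>$ and $\vv<D_{\ol{e_l}}D_{e_k}e_i,\ol{e_j}>$ directly from Lemma \ref{lem-comp-connection}; note that the normal-frame property $[e_k,\ol{e_l}](p)=0$ plays for the paper exactly the role your vanishing term $A(\tau(e_k,e_\bl),e_i)=0$ plays for you, since at $p$ one has $[e_k,\ol{e_l}]=\na_{e_k}\ol{e_l}-\na_{\ol{e_l}}e_k-\tau(e_k,\ol{e_l})=0$ by the defining property of the canonical connection. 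Your global difference-tensor formulation $D=\na+A$ with the two-connection curvature formula (whose statement is correct, including the sign and placement of the torsion correction $A(\tau(X,Y),Z)$) is frame-independent, and in particular does not require the existence of pseudo holomorphic normal frames, which is the one external ingredient of the paper's argument; what it costs is carrying all components of $A$ through the bilinear term, which the paper also does, packaged as \eqref{eqn-com-con-1} and \eqref{eqn-com-con-2}. Your intermediate claims check against the paper: $\vv<A(e_\bl,e_i),e_\bj>=\frac12\tau_{\bl\bj}^\bi$ and $\vv<A(e_k,e_i),e_\bj>=\frac12\tau_{ik}^j$ agree with \eqref{eqn-com-con-1}--\eqref{eqn-com-con-2}, and your use of (2) and (3) of Corollary \ref{cor-first-bian-2} to convert $\frac12\tau_{\bl\bj;k}^\bi-\frac12\tau_{ik;\bl}^j$ into $\frac12(R_{i\bl k\bj}+R_{k\bj i\bl})-\frac12\cs{\tau_{k\lam}^\bi\tau_{\bj\bl}^\lam+\tau_{ik}^\bla\tau_{\bl\bla}^j}$, with the three copies of $R_{i\bj k\bl}$ cancelling, is exactly the paper's final combination step in \eqref{eqn-curv-5}. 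The one step you leave as an assertion, the quadratic expansion, does come out as stated: from $A(e_\bl,e_i)=\frac12\tau_{\bl\bla}^\bi e_\lam+\frac12\tau_{i\lam}^l\,\ol{e_\lam}$ and $A(e_k,e_i)=\frac12\tau_{ik}^\lam e_\lam+\frac12\cs{\tau_{ik}^\bla+\tau_{k\lam}^\bi-\tau_{\lam i}^\bk}\ol{e_\lam}$ one computes $\vv<A(e_k,e_\lam),e_\bj>=-\frac12\tau_{k\lam}^j$, $\vv<A(e_k,\ol{e_\lam}),e_\bj>=-\frac12\tau_{\bj\bla}^\bk$, $\vv<A(e_\bl,e_\lam),e_\bj>=-\frac12\tau_{\bj\bl}^\bla$ and $\vv<A(e_\bl,\ol{e_\lam}),e_\bj>=-\frac12\cs{\tau_{\bj\bl}^\lam+\tau_{\bl\bla}^j-\tau_{\bla\bj}^l}$, whence $\vv<A(e_k,A(e_\bl,e_i))-A(e_\bl,A(e_k,e_i)),e_\bj>$ equals precisely $-\frac14\cs{\tau_{\bl\bla}^\bi\tau_{k\lam}^j+\tau_{i\lam}^l\tau_{\bj\bla}^\bk-\tau_{ik}^\lam\tau_{\bj\bl}^\bla}+\frac14\cs{\tau_{ik}^\bla+\tau_{k\lam}^\bi-\tau_{\lam i}^\bk}\cs{\tau_{\bj\bl}^\lam+\tau_{\bl\bla}^j-\tau_{\bla\bj}^l}$, so your argument closes and reproduces the theorem.
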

\begin{proof} For $p\in M$, let $(e_1,e_2,\cdots,e_n)$ be local $(1,0)$-frame at $p$ such that $\nabla e_i(p)=0$ and $g_{i\bj}(p)=\delta_{ij}$ (See \cite{Yu2} for the existence of such frames). Then
\begin{equation}\label{eqn-lie}
[e_k,\ol{e_l}](p)=\nabla_{e_k}\ol{e_l}(p)-\nabla_{\ol{e_l}}e_k(p)-\tau(e_k,\ol{e_l})(p)=0.
\end{equation}

By Lemma \ref{lem-comp-connection}, we have
\begin{equation}\label{eqn-com-con-1}
D_{\ol{e_l}}e_i(p)=\frac{1}{2}\tau_{i\lam}^l(p)\ol{e_\lam}+\frac{1}{2}\tau_{\bl\bla}^{\bi}(p)e_\lam
\end{equation}
and
\begin{equation}\label{eqn-com-con-2}
D_{e_k}e_i(p)=\frac{1}{2}\tau_{ik}^\lam e_\lam+\frac{1}{2}\lf(\tau_{ik}^\bla+\tau_{k\lam}^\bi-\tau_{\lam i}^\bk\ri)\ol{e_\lam}.
\end{equation}
Hence
\begin{equation}\label{eqn-curv-1}
\begin{split}
&\vv<D_{e_k}D_{\ol{e_l}}e_i,\ol{e_j}>(p)\\
=&\vv<\nabla_{e_k}D_{\ol{e_l}}e_i,\ol{e_j}>(p)+\frac{1}{2}\lf(\vv<\tau(D_{\ol{e_l}}e_i,e_k),\ol{e_j}>-\vv<\tau(\ol{e_j},D_{\ol{e_l}}e_i),e_k>\ri)(p)\\
=&e_k\vv<D_{\ol{e_l}}e_i,\ol{e_j}>(p)-\frac{1}{4}\tau_{ k\lam}^j\tau_{\bl\bla}^\bi(p)-\frac{1}{4}\tau_{i\lam}^l\tau_{\bj\bla}^\bk(p)\\
=&e_k\left(\vv<\na_{\ol{e_l}}e_i,\ol{e_j}>+\frac{1}{2}\vv<\tau(\ol{e_l},\ol{e_j}),e_i>\ri)(p)-\frac{1}{4}\tau_{ k\lam}^j\tau_{\bl\bla}^\bi(p)-\frac{1}{4}\tau_{i\lam}^l\tau_{\bj\bla}^\bk(p)\\
=&\vv<\na_{e_k}\na_{\ol{e_l}}e_i,\ol{e_j}>(p)-\frac{1}{2}\tau_{\bj\bl;k}^\bi(p)-\frac{1}{4}\tau_{ k\lam}^j\tau_{\bl\bla}^\bi(p)-\frac{1}{4}\tau_{i\lam}^l\tau_{\bj\bla}^\bk(p)\\
\end{split}
\end{equation}
where we have used Lemma \ref{lem-comp-connection}, \eqref{eqn-com-con-1} and \eqref{eqn-com-con-2}.

Similarly, we have
\begin{equation}\label{eqn-curv-2}
\begin{split}
&\vv<D_{\ol{e_l}}D_{e_k}e_i,\ol{e_j}>(p)\\
=&\vv<\na_{\ol{e_l}}D_{e_k}e_i,\ol{e_j}>(p)+\frac{1}{2}(\vv<\tau(D_{e_k}e_i,\ol{e_l}),\ol{e_j}>+\vv<\tau(\ol{e_l},\ol{e_j}),D_{e_k}e_i>-\vv<\tau(\ol{e_j},D_{e_k}e_i),\ol{e_l}>)(p)\\
=&\ol{e_l}\vv<D_{e_k}e_i,\ol{e_j}>-\frac{1}{4}(\tau_{ik}^\bla+\tau_{k\lam}^\bi-\tau_{\lam i}^\bk)(\tau_{\bj\bl}^\lam+\tau_{\bl\bla}^j-\tau_{\bla\bj}^l)-\frac{1}{4}\tau_{\bj\bl}^\bla\tau_{ik}^\lam\\
=&\ol{e_l}\lf(\vv<\nabla_{e_k}e_i,\ol{e_j}>(p)+\frac{1}{2}\vv<\tau(e_i,e_k),\ol{e_j}>\ri)-\frac{1}{4}(\tau_{ik}^\bla+\tau_{k\lam}^\bi-\tau_{\lam i}^\bk)(\tau_{\bj\bl}^\lam+\tau_{\bl\bla}^j-\tau_{\bla\bj}^l)-\frac{1}{4}\tau_{\bj\bl}^\bla\tau_{ik}^\lam\\
=&\vv<\na_{\ol{e_l}}\na_{e_k}e_i,\ol{e_j}>+\frac{1}{2}\tau_{ik;\bl}^j-\frac{1}{4}(\tau_{ik}^\bla+\tau_{k\lam}^\bi-\tau_{\lam i}^\bk)(\tau_{\bj\bl}^\lam+\tau_{\bl\bla}^j-\tau_{\bla\bj}^l)-\frac{1}{4}\tau_{\bj\bl}^\bla\tau_{ik}^\lam.\\
\end{split}
\end{equation}
So,
\begin{equation}\label{eqn-curv-5}
\begin{split}
&R^L_{i\bj k\bl}(p)\\
=&\vv<D_{e_k}D_{\ol{e_l}}e_i-D_{\ol{e_l}}D_{e_k}e_i,\ol{e_j}>(p)\\
=&R_{i\bj k\bl}-\frac{1}{2}(\tau_{\bj\bl;k}^\bi+\tau_{ik;\bl}^j)-\frac{1}{4}(\tau_{ k\lam}^j\tau_{\bl\bla}^\bi+\tau_{i\lam}^l\tau_{\bj\bla}^\bk)+\frac{1}{4}(\tau_{ik}^\bla+\tau_{k\lam}^\bi-\tau_{\lam i}^\bk)(\tau_{\bj\bl}^\lam+\tau_{\bl\bla}^j-\tau_{\bla\bj}^l)+\frac{1}{4}\tau_{\bj\bl}^\bla\tau_{ik}^\lam\\
=&\frac{1}{2}(R_{i\bl k\bj}+R_{k\bj i\bl})-\frac{1}{2}(\tau_{k\lam}^\bi\tau_{\bj\bl}^\lam+\tau_{\bl\bla}^j\tau_{ik}^\bla)-\frac{1}{4}(\tau_{ k\lam}^j\tau_{\bl\bla}^\bi+\tau_{i\lam}^l\tau_{\bj\bla}^\bk-\tau_{\bj\bl}^\bla\tau_{ik}^\lam)+\frac{1}{4}(\tau_{ik}^\bla+\tau_{k\lam}^\bi-\tau_{\lam i}^\bk)(\tau_{\bj\bl}^\lam+\tau_{\bl\bla}^j-\tau_{\bla\bj}^l)\\
\end{split}
\end{equation}
where we have used (2),(3) in Corollary \ref{cor-first-bian-2},\eqref{eqn-lie}, \eqref{eqn-curv-1} and \eqref{eqn-curv-2}.
\end{proof}
By directly using the last identity, we have the following identities for holomorphic sectional curvature.
\begin{cor} Let $(M,J,g)$ be an almost Hermitian manifold and fix a local unitary $(1,0)$-frame. Then
\begin{equation}
R^L_{i\bi i\bi}=R_{i\bi i\bi}+\tau_{i\lam}^\bi\tau_{\bi\bla}^i-\frac{1}{2}\tau_{i\lam}^i\tau_{\bi\bla}^\bi.
\end{equation}
Moreover, when the manifold is Hermitian,
\begin{equation}
R^L_{i\bi i\bi}=R_{i\bi i\bi}-\frac{1}{2}\tau_{i\lam}^i\tau_{\bi\bla}^\bi.
\end{equation}
So, the holomorphic sectional curvature of the Levi-Civita connection is not greater than the holomorphic sectional curvature of the Chern connection.

When the manifold is quasi K\"ahler,
\begin{equation}
R^L_{i\bi i\bi}=R_{i\bi i\bi}+\tau_{i\lam}^\bi\tau_{\bi\bla}^i.
\end{equation}
So, the holomorphic sectional curvature of the Levi-Civita connection is not less than the holomorphic sectional curvature of the canonical connection. Furthermore, when the manifold is nearly K\"ahler,
\begin{equation}
R^L_{i\bi i\bi}=R_{i\bi i\bi}
\end{equation}
which means that the holomorphic sectional curvature of the Levi-Civita connection is the same as the holomorphic sectional curvature of the canonical connection.
\end{cor}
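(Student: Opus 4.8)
The plan is to derive the corollary as the ``diagonal'' case of Theorem~\ref{thm-curv-1}: I would simply substitute $j=k=l=i$ (with $i$ fixed, no summation) into the master identity for $R^L_{i\bj k\bl}$ and simplify the four groups of torsion quadratics. Under this substitution the symmetrized curvature term $\frac12(R_{i\bl k\bj}+R_{k\bj i\bl})$ collapses at once to $R_{i\bi i\bi}$, so all the work is in tracking the torsion terms, and no new input beyond the symmetries of $\tau$ is needed.

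First I would use the antisymmetry of the torsion in its two lower arguments, which in a unitary frame reads $\tau^C_{AB}=-\tau^C_{BA}$; in particular $\tau^\lam_{ii}=\tau^\bla_{\bi\bi}=0$. Setting $k=i$ this kills the term $\frac14\tau^\lam_{ik}\tau^\bla_{\bj\bl}$ and, together with $\tau^\lam_{\bi\bi}=0$, the entire group $-\frac12(\tau^\bi_{k\lam}\tau^\lam_{\bj\bl}+\tau^\bla_{ik}\tau^j_{\bl\bla})$. The remaining quadratic $-\frac14(\tau^\bi_{\bl\bla}\tau^j_{k\lam}+\tau^l_{i\lam}\tau^\bk_{\bj\bla})$ becomes $-\frac14\cdot 2\,\tau^i_{i\lam}\tau^\bi_{\bi\bla}=-\frac12\tau^i_{i\lam}\tau^\bi_{\bi\bla}$.

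The only term needing care is the product $\frac14(\tau^\bla_{ik}+\tau^\bi_{k\lam}-\tau^\bk_{\lam i})(\tau^\lam_{\bj\bl}+\tau^j_{\bl\bla}-\tau^l_{\bla\bj})$. After setting $j=k=l=i$ and discarding $\tau^\bla_{ii}=\tau^\lam_{\bi\bi}=0$, the antisymmetry $\tau^\bi_{\lam i}=-\tau^\bi_{i\lam}$ doubles each surviving factor, so the two factors become $2\tau^\bi_{i\lam}$ and $2\tau^i_{\bi\bla}$ and the product contributes $\tau^\bi_{i\lam}\tau^i_{\bi\bla}$. Collecting everything yields exactly $R^L_{i\bi i\bi}=R_{i\bi i\bi}+\tau^\bi_{i\lam}\tau^i_{\bi\bla}-\frac12\tau^i_{i\lam}\tau^\bi_{\bi\bla}$, which is the first displayed identity.

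For the four special cases I would invoke the structural definitions and note positivity. Since $\tau$ is real, $\tau^\bi_{\bi\bla}=\overline{\tau^i_{i\lam}}$ and $\tau^i_{\bi\bla}=\overline{\tau^\bi_{i\lam}}$, so $\tau^i_{i\lam}\tau^\bi_{\bi\bla}=\sum_\lam|\tau^i_{i\lam}|^2$ and $\tau^\bi_{i\lam}\tau^i_{\bi\bla}=\sum_\lam|\tau^\bi_{i\lam}|^2$ are both nonnegative. In the Hermitian case $\tau^\bk_{ij}=0$ removes the first correction, leaving $R^L_{i\bi i\bi}=R_{i\bi i\bi}-\frac12\sum_\lam|\tau^i_{i\lam}|^2\le R_{i\bi i\bi}$; in the quasi K\"ahler case $\tau^k_{ij}=0$ removes the second, giving the reverse inequality. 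For the nearly K\"ahler case Lemma~\ref{lem-criterion-nearly-kahler} gives $\tau^k_{ij}=0$ together with the cyclic relation $\tau^\bk_{ij}=\tau^\bi_{jk}$; specializing this to $\tau^\bi_{i\lam}=\tau^\bi_{\lam i}$ and combining with lower antisymmetry forces $\tau^\bi_{i\lam}=0$, so both corrections vanish and $R^L_{i\bi i\bi}=R_{i\bi i\bi}$. The one genuinely non-mechanical point is this nearly K\"ahler vanishing, together with recognizing the two corrections as squared norms so that the inequalities have the claimed direction; everything else is routine index bookkeeping off Theorem~\ref{thm-curv-1}, where the main risk is mishandling an antisymmetry sign and losing a factor of two.
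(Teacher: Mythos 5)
Your proposal is correct and is exactly the argument the paper leaves implicit: the corollary is stated without proof as the diagonal specialization $j=k=l=i$ of Theorem~3.1, and your index bookkeeping (including the factor-of-two doublings from $\tau^\bk_{\lam i}=-\tau^\bk_{i\lam}$ and the nearly K\"ahler vanishing $\tau^\bi_{i\lam}=0$ forced by combining $\tau^\bk_{ij}=\tau^\bi_{jk}$ with lower antisymmetry) checks out. Your identification of the two correction terms as $\sum_\lam|\tau^\bi_{i\lam}|^2$ and $\sum_\lam|\tau^i_{i\lam}|^2$ via reality of $\tau$ also correctly justifies the directions of the stated inequalities.
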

By that when the complex structure is integrable, $\tau_{ij}^\bk=0$, we have the following corollary.
\begin{cor}
Let $(M,J,g)$ be a Hermitian manifold and fixed a local unitary  $(1,0)$-frame. Then
\begin{equation}
R^L_{i\bj k\bl}=\frac{1}{2}(R_{i\bl k\bj}+R_{k\bj i\bl})-\frac{1}{4}\cs{\tau_{\bl\bla}^\bi\tau_{ k\lam}^j+\tau_{i\lam}^l\tau_{\bj\bla}^\bk-\tau_{ik}^\lam\tau_{\bj\bl}^\bla}.
\end{equation}
\end{cor}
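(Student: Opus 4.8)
The plan is to obtain this identity as an immediate specialization of Theorem \ref{thm-curv-1} to the integrable case, so that the entire argument reduces to tracking which torsion components vanish. First I would invoke the fact that a Hermitian manifold has integrable almost complex structure, so by Lemma \ref{lem-Nijenhui-torsion} the Nijenhuis tensor vanishes and hence $\tau_{ij}^\bk=0$ for all $i,j,k$. Taking the conjugate of this relation gives $\tau_{\bi\bj}^k=0$ as well. Together with the defining property of the canonical connection, that its $(1,1)$-part of torsion vanishes (so every component $\tau_{i\bj}^A$ is zero), this leaves as the only surviving torsion components the pure ones $\tau_{ij}^k$ and their conjugates $\tau_{\bi\bj}^\bk$.

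The key observation I would then record is a simple bookkeeping rule: a torsion component survives precisely when the type of its upper index (barred or unbarred) matches the common type of its two lower indices. Applying this to the general formula of Theorem \ref{thm-curv-1}, I would examine the two groups of terms that do not appear in the corollary. In the term $-\frac12(\tau_{k\lam}^\bi\tau_{\bj\bl}^\lam+\tau_{ik}^\bla\tau_{\bl\bla}^j)$, each of the factors $\tau_{k\lam}^\bi$, $\tau_{\bj\bl}^\lam$, $\tau_{ik}^\bla$, $\tau_{\bl\bla}^j$ has a mismatched upper index and therefore vanishes, so the whole term drops out. Likewise, in $\frac14(\tau_{ik}^\bla+\tau_{k\lam}^\bi-\tau_{\lam i}^\bk)(\tau_{\bj\bl}^\lam+\tau_{\bl\bla}^j-\tau_{\bla\bj}^l)$, every one of the six summands constituting the two factors is of mismatched type, so both factors vanish identically and the product contributes nothing.

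Finally, I would confirm that the surviving terms are exactly those retained in the statement. The curvature terms $\frac12(R_{i\bl k\bj}+R_{k\bj i\bl})$ are untouched, and in the remaining quadratic torsion expression $-\frac14(\tau_{\bl\bla}^\bi\tau_{k\lam}^j+\tau_{i\lam}^l\tau_{\bj\bla}^\bk-\tau_{ik}^\lam\tau_{\bj\bl}^\bla)$ every factor is a pure component of type $\tau_{ij}^k$ or $\tau_{\bi\bj}^\bk$ and hence survives. Collecting these yields precisely the asserted formula. There is no analytic obstacle here: the proof is pure substitution, and the only point requiring care is the index-type bookkeeping, ensuring one neither retains a mismatched component nor discards a surviving one when reading off the contractions in Theorem \ref{thm-curv-1}.
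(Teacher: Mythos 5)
Your proposal is correct and is essentially the paper's own argument: the paper derives this corollary by simply substituting $\tau_{ij}^\bk=0$ (a consequence of integrability via Lemma \ref{lem-Nijenhui-torsion}) into Theorem \ref{thm-curv-1}, which is exactly your specialization. Your index-type bookkeeping correctly identifies that the terms $-\frac{1}{2}(\tau_{k\lam}^\bi\tau_{\bj\bl}^\lam+\tau_{ik}^\bla\tau_{\bl\bla}^j)$ and $\frac{1}{4}(\tau_{ik}^\bla+\tau_{k\lam}^\bi-\tau_{\lam i}^\bk)(\tau_{\bj\bl}^\lam+\tau_{\bl\bla}^j-\tau_{\bla\bj}^l)$ vanish while the remaining quadratic torsion terms survive.
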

By the second equality in \eqref{eqn-curv-5}, we have the following corollary.

\begin{cor}\label{cor-quasi-kahler-1}
Let $(M,J,g)$ be a quasi K\"ahler manifold and fixe a local unitary $(1,0)$-frame. Then
\begin{equation}
R^L_{i\bj k\bl}=R_{i\bj k\bl}+\frac{1}{4}\cs{\tau_{ik}^\bla+\tau_{k\lam}^\bi-\tau_{\lam i}^\bk}\cs{\tau_{\bj\bl}^\lam+\tau_{\bl\bla}^j-\tau_{\bla\bj}^l}.
\end{equation}
So,
$$R^L(X,\ol X,Y,\ol Y)\geq R(X,\ol X,Y,\ol Y)$$
for any $(1,0)$-vectors $X$ and $Y$.
\end{cor}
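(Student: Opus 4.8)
The plan is to specialize the second equality in \eqref{eqn-curv-5} to the quasi K\"ahler setting, where the only simplification available is the vanishing of the $(1,0)$-valued part of the torsion. Recall that quasi K\"ahlerity is precisely the condition $\tau_{ij}^k=0$ for all $i,j,k$; taking complex conjugates this also yields $\tau_{\bi\bj}^\bk=0$. Since the canonical connection satisfies $\nabla J=0$, it preserves the type decomposition, so these component identities persist after covariant differentiation: $\tau_{ij;A}^k=0$ and $\tau_{\bi\bj;A}^\bk=0$ for every index $A$.

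First I would copy out the second equality in \eqref{eqn-curv-5},
\begin{equation*}
\begin{split}
R^L_{i\bj k\bl}=&R_{i\bj k\bl}-\frac{1}{2}\cs{\tau_{\bj\bl;k}^\bi+\tau_{ik;\bl}^j}-\frac{1}{4}\cs{\tau_{ k\lam}^j\tau_{\bl\bla}^\bi+\tau_{i\lam}^l\tau_{\bj\bla}^\bk}\\
&+\frac{1}{4}\cs{\tau_{ik}^\bla+\tau_{k\lam}^\bi-\tau_{\lam i}^\bk}\cs{\tau_{\bj\bl}^\lam+\tau_{\bl\bla}^j-\tau_{\bla\bj}^l}+\frac{1}{4}\tau_{\bj\bl}^\bla\tau_{ik}^\lam,
\end{split}
\end{equation*}
and then strike out every torsion symbol whose upper index is unbarred while both of its lower indices are unbarred, together with its conjugate pattern. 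Concretely, $\tau_{ik}^j$ and $\tau_{\bj\bl}^\bi$ vanish, so the derivative term $-\frac{1}{2}\cs{\tau_{\bj\bl;k}^\bi+\tau_{ik;\bl}^j}$ drops out; the factors $\tau_{k\lam}^j$ and $\tau_{i\lam}^l$ vanish, killing $-\frac{1}{4}\cs{\tau_{ k\lam}^j\tau_{\bl\bla}^\bi+\tau_{i\lam}^l\tau_{\bj\bla}^\bk}$; and $\tau_{ik}^\lam$ vanishes, killing the last term $\frac{1}{4}\tau_{\bj\bl}^\bla\tau_{ik}^\lam$. The middle product is retained in full, since each of its factors is built only from torsion components of $(2,0)$-to-$(0,1)$ type (such as $\tau_{ik}^\bla$) or their conjugates, none of which vanish for a quasi K\"ahler manifold.

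After these cancellations only $R_{i\bj k\bl}$ and the middle product survive, which is exactly the asserted identity. I expect no genuine obstacle; the single point meriting care is the justification that the covariant derivatives $\tau_{\bj\bl;k}^\bi$ and $\tau_{ik;\bl}^j$ vanish, which rests on $\nabla J=0$ forcing the $(1,0)$-valued part of $\nabla\tau$ to vanish together with that of $\tau$, rather than on differentiating a quantity that merely happens to be zero at the chosen point $p$.
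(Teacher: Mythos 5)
Your proof is correct and takes essentially the same route as the paper: the paper's entire proof of this corollary is the one-line observation that it follows from the second equality in \eqref{eqn-curv-5} once the quasi K\"ahler condition $\tau_{ij}^k=0$ (and its conjugate) is imposed, which is exactly your specialization. Your added justification that $\tau_{\bj\bl;k}^\bi=\tau_{ik;\bl}^j=0$ holds identically because $\nabla J=0$ makes the type decomposition parallel correctly fills in a detail the paper leaves implicit.
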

\begin{rem}
A similar identity was also obtained in \cite{SV}.
\end{rem}

By noting that for an almost K\"ahler manifold, $\tau_{ij}^\bk+\tau_{jk}^\bi+\tau_{ki}^\bj=0$. We have the following corollary.
\begin{cor}\label{cor-almost-kahler-1}
Let $(M,J,g)$ be an almost K\"ahler manifold and fix a unitary frame. Then
\begin{equation}
R^L_{i\bj k\bl}=R_{i\bj k\bl}+\tau_{\lam i}^\bk\tau_{\bla\bj}^l.
\end{equation}
\end{cor}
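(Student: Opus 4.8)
The plan is to obtain this identity as a specialization of the quasi K\"ahler formula in Corollary \ref{cor-quasi-kahler-1}, exploiting the extra torsion symmetry that distinguishes almost K\"ahler from merely quasi K\"ahler manifolds. Since an almost K\"ahler manifold is in particular quasi K\"ahler (both satisfy $\tau_{ij}^k=0$), I would start from
\begin{equation*}
R^L_{i\bj k\bl}=R_{i\bj k\bl}+\frac{1}{4}\cs{\tau_{ik}^\bla+\tau_{k\lam}^\bi-\tau_{\lam i}^\bk}\cs{\tau_{\bj\bl}^\lam+\tau_{\bl\bla}^j-\tau_{\bla\bj}^l}
\end{equation*}
and try to collapse each of the two three-term factors using the almost K\"ahler condition $\tau_{ij}^\bk+\tau_{jk}^\bi+\tau_{ki}^\bj=0$ recalled just before the statement.

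The key observation is that the three monomials in each factor are precisely the three terms of the almost K\"ahler cyclic identity, up to the sign carried by the last term. Reading the first factor off the cyclic triple $(i,k,\lam)$, the almost K\"ahler condition reads $\tau_{ik}^\bla+\tau_{k\lam}^\bi+\tau_{\lam i}^\bk=0$, hence $\tau_{ik}^\bla+\tau_{k\lam}^\bi=-\tau_{\lam i}^\bk$; substituting into the first factor turns $\tau_{ik}^\bla+\tau_{k\lam}^\bi-\tau_{\lam i}^\bk$ into $-2\tau_{\lam i}^\bk$. Applying the conjugate of the same identity to the cyclic triple $(\bj,\bl,\bla)$ yields $\tau_{\bj\bl}^\lam+\tau_{\bl\bla}^j=-\tau_{\bla\bj}^l$, so the second factor reduces to $-2\tau_{\bla\bj}^l$.

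With both factors reduced, the product term becomes $\frac{1}{4}\cs{-2\tau_{\lam i}^\bk}\cs{-2\tau_{\bla\bj}^l}=\tau_{\lam i}^\bk\tau_{\bla\bj}^l$, and substituting back gives $R^L_{i\bj k\bl}=R_{i\bj k\bl}+\tau_{\lam i}^\bk\tau_{\bla\bj}^l$. There is no analytic difficulty here; the whole content is the algebraic simplification, and the only point demanding care is the index bookkeeping — confirming that the barred and unbarred placements in the two factors of Corollary \ref{cor-quasi-kahler-1} really do match the cyclic triples of the almost K\"ahler condition and its conjugate, so that the minus sign on each factor's third term produces the multiplicative $-2$ rather than a cancellation. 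Once that matching is verified, the result follows by direct substitution.
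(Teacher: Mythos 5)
Your proposal is correct and is essentially the paper's own argument: the paper derives this corollary by specializing Corollary \ref{cor-quasi-kahler-1} via the almost K\"ahler identity $\tau_{ij}^\bk+\tau_{jk}^\bi+\tau_{ki}^\bj=0$, exactly as you do. Your index bookkeeping checks out — applying the cyclic identity to the triple $(i,k,\lam)$ and its conjugate to $(\bj,\bl,\bla)$ collapses the two factors to $-2\tau_{\lam i}^\bk$ and $-2\tau_{\bla\bj}^l$, and the factor $\frac{1}{4}$ absorbs the product of the two $-2$'s.
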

By Lemma \ref{lem-criterion-nearly-kahler}, we have the following curvature identity for nearly K\"ahler manifolds.
\begin{cor}\label{cor-nearly-kahler-1}
Let $(M,J,g)$ be a nearly K\"ahler manifold and fix a local unitary $(1,0)$-frame. Then
\begin{equation}
R^L_{i\bj k\bl}=R_{i\bj k\bl}+\frac{1}{4}\tau_{ik}^\bla\tau_{\bj\bl}^\lam.
\end{equation}

\end{cor}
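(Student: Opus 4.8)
The plan is to specialize the quasi Kähler formula in Corollary \ref{cor-quasi-kahler-1} to the nearly Kähler setting. Since nearly Kählerity implies $\tau_{ij}^k=0$ by Lemma \ref{lem-criterion-nearly-kahler}, the manifold is in particular quasi Kähler, so Corollary \ref{cor-quasi-kahler-1} applies and gives
\begin{equation*}
R^L_{i\bj k\bl}=R_{i\bj k\bl}+\frac{1}{4}\cs{\tau_{ik}^\bla+\tau_{k\lam}^\bi-\tau_{\lam i}^\bk}\cs{\tau_{\bj\bl}^\lam+\tau_{\bl\bla}^j-\tau_{\bla\bj}^l}.
\end{equation*}
It therefore suffices to show that, under the nearly Kähler condition, each of the two three-term factors collapses to a single term.

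The key observation I would use is that the nearly Kähler identity $\tau_{ij}^\bk=\tau_{jk}^\bi$ from Lemma \ref{lem-criterion-nearly-kahler}, combined with the skew-symmetry $\tau_{ij}^\bk=-\tau_{ji}^\bk$ of the torsion in its lower indices, makes $\tau_{ij}^\bk$ totally antisymmetric in the three indices $i,j,k$. In practice I only need the cyclic form $\tau_{ab}^\bc=\tau_{bc}^\ba$: relabeling indices gives $\tau_{k\lam}^\bi=\tau_{ik}^\bla$ (take $a=i,b=k,c=\lambda$) and $\tau_{\lam i}^\bk=\tau_{ik}^\bla$ (take $a=\lambda,b=i,c=k$), so that
\begin{equation*}
\tau_{ik}^\bla+\tau_{k\lam}^\bi-\tau_{\lam i}^\bk=\tau_{ik}^\bla+\tau_{ik}^\bla-\tau_{ik}^\bla=\tau_{ik}^\bla.
\end{equation*}

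Applying the conjugate identity $\tau_{\ba\bb}^c=\tau_{\bb\bc}^a$ to the second factor in the same fashion yields $\tau_{\bl\bla}^j=\tau_{\bj\bl}^\lam$ and $\tau_{\bla\bj}^l=\tau_{\bj\bl}^\lam$, so the second factor collapses to $\tau_{\bj\bl}^\lam$. Substituting both simplified factors back gives exactly
\begin{equation*}
R^L_{i\bj k\bl}=R_{i\bj k\bl}+\frac14\tau_{ik}^\bla\tau_{\bj\bl}^\lam,
\end{equation*}
as claimed. I do not expect any genuine obstacle here: the whole argument is an index manipulation, and the only point requiring care is bookkeeping the relabelings so that, before summing, every term in each factor is written through the single torsion component $\tau_{ik}^\bla$ (respectively $\tau_{\bj\bl}^\lam$).
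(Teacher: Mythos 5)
Your proposal is correct and is exactly the paper's route: the paper obtains this corollary by specializing Corollary \ref{cor-quasi-kahler-1} via Lemma \ref{lem-criterion-nearly-kahler}, and your index relabelings using the cyclic symmetry $\tau_{ab}^{\bar c}=\tau_{bc}^{\bar a}$ (and its conjugate) to collapse each three-term factor to a single term, valid termwise in the summation index $\lambda$, are precisely the computation the paper leaves implicit.
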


\begin{thm}\label{thm-curv-2}
Let $(M,J,g)$ be an almost Hermitian manifold and fix a local unitary $(1,0)$-frame. Then
\begin{equation}
\begin{split}
R^L_{ijk\bl}=&\frac{1}{2}\cs{R_{k\bl ij}-R_{i\bl jk}-R_{j\bl ki}}+\frac{1}{2}\cs{\tau_{ij;k}^l-\tau_{ij}^\bla\tau_{\bl\bla}^\bk}+\frac{1}{4}\cs{\tau_{j\lam}^l\tau_{ik}^\lam-\tau_{i\lam}^l\tau_{jk}^\lam}
\\
&+\frac{1}{4}\tau_{\bl\bla}^\bi(\tau_{jk}^\bla-\tau_{k\lam }^\bj+\tau_{\lam j}^\bk)-\frac{1}{4}\tau_{\bl\bla}^\bj\cs{\tau_{ik}^\bla-\tau_{k\lam}^\bi+\tau_{\lam i}^\bk}.
\end{split}
\end{equation}
\end{thm}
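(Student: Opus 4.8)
The plan is to follow the proof of Theorem \ref{thm-curv-1} line for line, changing only the vector against which we contract. Fix $p\in M$ and choose a local pseudo holomorphic normal frame $(e_1,\dots,e_n)$ at $p$, so that $\na e_i(p)=0$, $g_{i\bj}(p)=\delta_{ij}$ and $[e_k,\ol{e_l}](p)=0$. Because the bracket vanishes at $p$, the curvature of the Levi-Civita connection is
\begin{equation*}
R^L_{ijk\bl}(p)=\vv<D_{e_k}D_{\ol{e_l}}e_i-D_{\ol{e_l}}D_{e_k}e_i,e_j>(p),
\end{equation*}
and the task is to evaluate the two terms on the right using Lemma \ref{lem-comp-connection} together with the expansions \eqref{eqn-com-con-1} and \eqref{eqn-com-con-2} of $D_{\ol{e_l}}e_i$ and $D_{e_k}e_i$, exactly as in \eqref{eqn-curv-1} and \eqref{eqn-curv-2}.

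The one structural difference from Theorem \ref{thm-curv-1} is that here we pair with the $(1,0)$-vector $e_j$ rather than with $\ol{e_j}$. Consequently, when expanding $D_{e_k}(D_{\ol{e_l}}e_i)$ and $D_{\ol{e_l}}(D_{e_k}e_i)$ through Lemma \ref{lem-comp-connection}, the middle torsion term $\vv<\tau(e_k,e_j),\cdot>$ no longer drops out: it is governed by the $(2,0)$-part $\tau_{kj}^\lam$ of the torsion, which annihilated $\ol{e_j}$ in the proof of Theorem \ref{thm-curv-1} but now contributes. I would first record from \eqref{eqn-com-con-1} and \eqref{eqn-com-con-2} the base values $\vv<D_{\ol{e_l}}e_i,e_j>(p)=\frac12\tau_{ij}^l$ and $\vv<D_{e_k}e_i,e_j>(p)=\frac12\cs{\tau_{ik}^\bj+\tau_{kj}^\bi-\tau_{ji}^\bk}$, differentiate them along $e_k$ and $\ol{e_l}$ respectively (the normal-frame conditions convert ordinary derivatives of the components into covariant derivatives with respect to $\na$), and then add the purely quadratic torsion contributions coming from substituting \eqref{eqn-com-con-1} and \eqref{eqn-com-con-2} into the surviving torsion terms of Lemma \ref{lem-comp-connection}.

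Taking the difference, the two second covariant derivatives of the canonical connection combine into $R_{ijk\bl}(p)$, which vanishes by \eqref{eqn-id-curv}; hence every surviving term is built from torsion. The derivative-of-torsion terms are $\frac12\tau_{ij;k}^l$ and $-\frac12\cs{\tau_{ik;\bl}^\bj+\tau_{kj;\bl}^\bi-\tau_{ji;\bl}^\bk}$. The three terms of the second kind are exactly of the shape appearing in item (5) of Corollary \ref{cor-first-bian-2}, which in general index form reads $\tau_{ab;\bc}^\bd=-R_{d\bc ab}+\tau_{\bc\bla}^\bd\tau_{ab}^\bla$; applying it three times and using the antisymmetry $R_{ABCD}=-R_{ABDC}$ converts them into $\frac12\cs{R_{k\bl ij}-R_{i\bl jk}-R_{j\bl ki}}$ together with three more quadratic torsion terms. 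This yields the curvature block of the claimed identity, with no recourse to any identity not already used in Theorem \ref{thm-curv-1}.

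The remaining work, and the genuine obstacle, is bookkeeping. I would collect all torsion-quadratic contributions — those generated by the surviving middle term $\tau(e_k,e_j)$, the terms $\tau(D_{\ol{e_l}}e_i,\cdot)$ and $\tau(D_{e_k}e_i,\cdot)$ of Lemma \ref{lem-comp-connection}, and the $\tau_{\bc\bla}^\bd\tau_{ab}^\bla$ by-products of the three applications of item (5) — and verify that, after relabelling summation indices and using only the antisymmetry $\tau_{AB}^C=-\tau_{BA}^C$, they collapse to
\begin{equation*}
-\frac12\tau_{ij}^\bla\tau_{\bl\bla}^\bk+\frac14\cs{\tau_{j\lam}^l\tau_{ik}^\lam-\tau_{i\lam}^l\tau_{jk}^\lam}+\frac14\tau_{\bl\bla}^\bi\cs{\tau_{jk}^\bla-\tau_{k\lam}^\bj+\tau_{\lam j}^\bk}-\frac14\tau_{\bl\bla}^\bj\cs{\tau_{ik}^\bla-\tau_{k\lam}^\bi+\tau_{\lam i}^\bk}.
\end{equation*}
The only real care required is to track the $(2,0)$-torsion terms that are new to this contraction; once they are accounted for, adding the derivative term $\frac12\tau_{ij;k}^l$ and the curvature block completes the identity.
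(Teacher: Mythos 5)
Your proposal is correct and follows essentially the same route as the paper's own proof: a pseudo holomorphic normal frame at $p$, expansion of $\vv<D_{e_k}D_{\ol{e_l}}e_i,e_j>$ and $\vv<D_{\ol{e_l}}D_{e_k}e_i,e_j>$ via Lemma \ref{lem-comp-connection} together with \eqref{eqn-com-con-1}--\eqref{eqn-com-con-2}, and conversion of the three terms $\tau_{ik;\bl}^\bj$, $\tau_{kj;\bl}^\bi$, $\tau_{ji;\bl}^\bk$ into curvatures by item (5) of Corollary \ref{cor-first-bian-2} --- and your claimed quadratic tally is exactly what the bookkeeping yields. One cosmetic slip only: the surviving middle term $\vv<\tau(e_k,e_j),D_{\ol{e_l}}e_i>$ contributes through both $\tau_{kj}^\lam$ and $\tau_{kj}^\bla$ (not just the $(2,0)$-part), but your plan of substituting \eqref{eqn-com-con-1} into it captures both contributions, so nothing is lost.
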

\begin{proof} For $p\in M$ and $(e_1,e_2,\cdots,e_n)$ the local $(1,0)$-frame at $p$ as in the proof of Theorem \ref{thm-curv-1}. Then
\begin{equation}\label{eqn-curv-3}
\begin{split}
&\vv<D_{e_k}D_{\ol{e_l}}e_i,e_j>(p)\\
=&\vv<\na_{e_k}D_{\ol{e_l}}e_i,e_j>+\frac{1}{2}(\vv<\tau(D_{\ol{e_l}}e_i,e_k),e_j>+\vv<\tau(e_k,e_j),D_{\ol{e_l}}e_i>-\vv<\tau(e_j,D_{\ol{e_l}}e_i),e_k>)\\
=&e_k\vv<D_{\ol{e_l}}e_i,e_j>-\frac{1}{4}\tau_{\bl\bla}^\bi(\tau_{j k}^\bla+\tau_{k\lam}^\bj-\tau_{\lam j}^\bk)-\frac{1}{4}\tau_{i\lam}^l\tau_{jk}^\lam\\
=&e_k\left(\vv<\na_{\ol{e_l}}e_i,e_j>-\frac{1}{2}\vv<\tau(e_j,e_i),\ol{e_l}>\ri)-\frac{1}{4}\tau_{\bl\bla}^\bi(\tau_{j k}^\bla+\tau_{k\lam}^\bj-\tau_{\lam j}^\bk)-\frac{1}{4}\tau_{i\lam}^l\tau_{jk}^\lam\\
=&\frac{1}{2}\tau_{ji;k}^l(p)-\frac{1}{4}\tau_{\bl\bla}^\bi(\tau_{j k}^\bla+\tau_{k\lam}^\bj-\tau_{\lam j}^\bk)-\frac{1}{4}\tau_{i\lam}^l\tau_{jk}^\lam\\
\end{split}
\end{equation}
where we have used \eqref{eqn-com-con-1}. Moreover
\begin{equation}\label{eqn-curv-4}
\begin{split}
&\vv<D_{\ol{e_l}}D_{e_k}e_i,e_j>(p)\\
=&\vv<\na_{\ol{e_l}}D_{e_k}e_i,e_j>(p)+\frac{1}{2}(\vv<\tau(D_{e_k}e_i,\ol{e_l}),e_j>-\vv<\tau(e_j,D_{e_k}e_i),\ol{e_l}>)(p)\\
=&\ol{e_l}\cs{\vv<\na_{e_k}e_i,e_j>+\frac{1}{2}\cs{\vv<\tau(e_i,e_k),e_j>+\vv<\tau(e_k,e_j),e_i>-\vv<\tau(e_j,e_i),e_k>}}(p)\\
&-\frac{1}{4}\tau_{\bl\bla}^\bj\cs{\tau_{ik}^\bla+\tau_{k\lam}^\bi-\tau_{\lam i}^\bk}(p)-\frac{1}{4}\tau_{j\lam}^l\tau_{ik}^\lam(p)\\
=&\frac{1}{2}\cs{\tau_{ik;\bl}^\bj+\tau_{kj;\bl}^\bi-\tau_{ji;\bl}^\bk}(p)-\frac{1}{4}\tau_{\bl\bla}^\bj\cs{\tau_{ik}^\bla+\tau_{k\lam}^\bi-\tau_{\lam i}^\bk}(p)-\frac{1}{4}\tau_{j\lam}^l\tau_{ik}^\lam(p)\\
=&-\frac{1}{2}\cs{R_{j\bl ik}+R_{i\bl kj}-R_{k\bl ji}}(p)+\frac{1}{2}\cs{\tau_{\bl\bla}^\bj\tau_{ik}^\bla+\tau_{\bl\bla}^\bi\tau_{kj}^\bla-\tau_{\bl\bla}^\bk\tau_{ji}^\bla}(p)\\
&-\frac{1}{4}\tau_{\bl\bla}^\bj\cs{\tau_{ik}^\bla+\tau_{k\lam}^\bi-\tau_{\lam i}^\bk}(p)-\frac{1}{4}\tau_{j\lam}^l\tau_{ik}^\lam(p)\\
\end{split}
\end{equation}
where we have used Corollary \ref{cor-first-bian-2} and \eqref{eqn-com-con-2}.

Combining \eqref{eqn-curv-3} and \eqref{eqn-curv-4}, we get
\begin{equation}
\begin{split}
&R^L_{ijk\bl}(p)\\
=&\frac{1}{2}\cs{R_{j\bl ik}+R_{i\bl kj}-R_{k\bl ji}}+\frac{1}{2}\cs{\tau_{ij;k}^l-\tau_{ij}^\bla\tau_{\bl\bla}^\bk}+\frac{1}{4}\cs{\tau_{j\lam}^l\tau_{ik}^\lam-\tau_{i\lam}^l\tau_{jk}^\lam}
\\
&-\frac{1}{4}\tau_{\bl\bla}^\bi(-\tau_{jk}^\bla+\tau_{k\lam }^\bj-\tau_{\lam j}^\bk)+\frac{1}{4}\tau_{\bl\bla}^\bj\cs{-\tau_{ik}^\bla+\tau_{k\lam}^\bi-\tau_{\lam i}^\bk}.
\end{split}
\end{equation}
\end{proof}
Noting that $\tau_{ij}^\bk=0$ for Hermitian manifolds, we have the following corollary.
\begin{cor}\label{cor-curv-hermitian-2}
Let $(M,J,g)$ be a Hermitian manifold and fix a local unitary $(1,0)$-frame. Then
\begin{equation}
R^L_{ijk\bl}=\frac{1}{2}\tau_{ij;k}^l+\frac{1}{4}\cs{\tau_{j\lam}^l\tau_{ik}^\lam-\tau_{i\lam}^l\tau_{jk}^\lam}.
\end{equation}
\end{cor}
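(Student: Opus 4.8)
The plan is to obtain this identity as a direct specialization of the general almost Hermitian formula in Theorem \ref{thm-curv-2} to the integrable setting, where all the curvature terms and most of the torsion terms collapse.

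First I would record the consequence of integrability. Since $(M,J,g)$ is Hermitian, the almost complex structure is integrable, so Lemma \ref{lem-Nijenhui-torsion} gives $\tau_{ij}^\bk=0$ for all $i,j,k$, and by conjugation $\tau_{\bi\bj}^k=0$. Because $\tau_{ij}^\bk$ vanishes identically as a tensor, all of its covariant derivatives with respect to $\na$ vanish as well. The surviving torsion is the Chern torsion $\tau_{ij}^k$ (together with its conjugate $\tau_{\bi\bj}^\bk$), which is generally nonzero.

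Next I would substitute into the formula of Theorem \ref{thm-curv-2} and discard the vanishing pieces. The three curvature terms $R_{k\bl ij}$, $R_{i\bl jk}$, $R_{j\bl ki}$ are all of the type $R_{i\bj kl}$ (one $(0,1)$-index in the second slot and three $(1,0)$-indices), which vanishes on a Hermitian manifold by (4) of Corollary \ref{cor-Bian-hermitian}; hence the bracket $\frac{1}{2}\cs{R_{k\bl ij}-R_{i\bl jk}-R_{j\bl ki}}$ drops out entirely. Among the torsion terms, every factor with two $(1,0)$ lower indices and one $(0,1)$ upper index is a component of $\tau_{ij}^\bk$ and therefore vanishes: this kills $\tau_{ij}^\bla\tau_{\bl\bla}^\bk$ in the second group, and it kills the full parentheses $\cs{\tau_{jk}^\bla-\tau_{k\lam}^\bj+\tau_{\lam j}^\bk}$ and $\cs{\tau_{ik}^\bla-\tau_{k\lam}^\bi+\tau_{\lam i}^\bk}$ in the last two groups. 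The point to watch here is that the prefactors $\tau_{\bl\bla}^\bi$ and $\tau_{\bl\bla}^\bj$ are conjugate Chern torsion and do \emph{not} vanish, but since they multiply brackets that are entirely of $\tau_{ij}^\bk$-type they contribute nothing.

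What then remains is precisely $\frac{1}{2}\tau_{ij;k}^l+\frac{1}{4}\cs{\tau_{j\lam}^l\tau_{ik}^\lam-\tau_{i\lam}^l\tau_{jk}^\lam}$, every factor of which is a component of the Chern torsion $\tau_{ij}^k$, giving the claimed identity. The argument is a straightforward substitution, so there is no genuine obstacle; the only step requiring care is the bookkeeping of index types, in particular recognizing that the three curvature terms fall under the vanishing family of Corollary \ref{cor-Bian-hermitian}(4) rather than attempting to cancel them by hand, and correctly tracking that the surviving conjugate-torsion prefactors are annihilated by the $\tau_{ij}^\bk$-type brackets they meet.
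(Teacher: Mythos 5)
Your proposal is correct and is exactly the argument the paper intends: the corollary is stated without proof as a direct specialization of Theorem \ref{thm-curv-2} using $\tau_{ij}^\bk=0$ on a Hermitian manifold (Lemma \ref{lem-Nijenhui-torsion}), with the curvature terms $R_{k\bl ij}$, $R_{i\bl jk}$, $R_{j\bl ki}$ vanishing by Corollary \ref{cor-Bian-hermitian}(4). Your bookkeeping --- including the observation that the nonvanishing conjugate prefactors $\tau_{\bl\bla}^\bi$, $\tau_{\bl\bla}^\bj$ are annihilated by the brackets of $\tau_{ij}^\bk$-type they multiply, and that vanishing of $\tau_{ij}^\bk$ as a tensor component kills its $\na$-derivatives --- is accurate and complete.
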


 Applying the properties of quasi K\"ahler manifolds that   $\tau_{ij}^k=0$ and (6) in Corollary \ref{cor-bian-quasi-kahler}, we obtain the following corollary.
\begin{cor}\label{cor-quasi-kahler-2}
Let $(M,J,g)$ be a quasi K\"ahler manifold and fix a local unitary $(1,0)$-frame. Then
\begin{equation}
R^L_{ij k\bl}=R_{k\bl ij}.
\end{equation}
\end{cor}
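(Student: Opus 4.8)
The plan is to specialize Theorem~\ref{thm-curv-2} to the quasi K\"ahler case and then collapse the resulting curvature terms with a first Bianchi identity. First I would invoke the defining condition $\tau_{ij}^k=0$ of a quasi K\"ahler manifold, together with its conjugate $\tau_{\bi\bj}^\bk=0$, and check term by term that every torsion contribution in the formula of Theorem~\ref{thm-curv-2} vanishes. Indeed, $\tau_{ij;k}^l$ is the covariant derivative of the identically vanishing tensor $\tau_{ij}^l$ and hence is zero; the products $\tau_{j\lam}^l\tau_{ik}^\lam$ and $\tau_{i\lam}^l\tau_{jk}^\lam$ vanish because their first factors are of $(1,0)$-valued type $\tau_{\cdot\cdot}^{(1,0)}$; and each of the remaining terms $\tau_{ij}^\bla\tau_{\bl\bla}^\bk$, $\tau_{\bl\bla}^\bi\cs{\tau_{jk}^\bla-\tau_{k\lam}^\bj+\tau_{\lam j}^\bk}$ and $\tau_{\bl\bla}^\bj\cs{\tau_{ik}^\bla-\tau_{k\lam}^\bi+\tau_{\lam i}^\bk}$ carries a factor $\tau_{\bl\bla}^{\,\cdot}=\overline{\tau_{l\lam}^{\,\cdot}}$, the conjugate of a vanishing $(1,0)$-valued torsion, hence is zero as well.

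After these cancellations only the curvature terms survive, so that
\begin{equation}
R^L_{ijk\bl}=\frac{1}{2}\cs{R_{k\bl ij}-R_{i\bl jk}-R_{j\bl ki}}.
\end{equation}
The final step is to recognize the three curvature components as a cyclic sum in the holomorphic indices with the barred index $\bl$ held fixed. Applying (6) of Corollary~\ref{cor-bian-quasi-kahler} with the holomorphic indices taken to be $k,i,j$ in cyclic order gives $R_{k\bl ij}+R_{i\bl jk}+R_{j\bl ki}=0$, whence $R_{i\bl jk}+R_{j\bl ki}=-R_{k\bl ij}$. Substituting this into the displayed identity yields $R^L_{ijk\bl}=\frac{1}{2}\cs{R_{k\bl ij}+R_{k\bl ij}}=R_{k\bl ij}$, which is the claim.

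I do not anticipate any genuine obstacle here: the argument is purely algebraic, and the only care required is bookkeeping. The two points most likely to cause a slip are distinguishing which torsion terms are $(1,0)$-valued (and so vanish directly) from those that vanish only after conjugation, and matching the index pattern of the three surviving curvature terms to the precise cyclic ordering of the Bianchi identity (6). Getting either the list of cancellations or the cyclic ordering slightly wrong would be the main risk, so I would double-check the index substitution $k,i,j$ against the statement of Corollary~\ref{cor-bian-quasi-kahler}.
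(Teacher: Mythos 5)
Your proposal is correct and is essentially the paper's own argument: the paper obtains this corollary precisely by setting $\tau_{ij}^k=0$ (and hence its conjugate) in Theorem~\ref{thm-curv-2} and then collapsing $R_{k\bl ij}-R_{i\bl jk}-R_{j\bl ki}$ to $2R_{k\bl ij}$ via identity (6) of Corollary~\ref{cor-bian-quasi-kahler}. Your index substitution into the cyclic Bianchi identity and your term-by-term cancellation (including the observation that $\tau_{ij;k}^l$ vanishes as the covariant derivative of an identically zero component, legitimate since $\nabla J=0$ preserves types) both check out.
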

\begin{rem}
The same identity was also obtained in \cite{SV}.
\end{rem}
By (5) in Corollary \ref{cor-bian-quasi-kahler}, we have the following corollary for nearly K\"ahler manifolds.
\begin{cor}\label{cor-nearly-kahler-2}
Let $(M,J,g)$ be a nearly K\"ahler manifold and fix a local unitary $(1,0)$-frame. Then
\begin{equation}
R^L_{ijk\bl}=0.
\end{equation}
\end{cor}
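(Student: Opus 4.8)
The plan is to read the result straight off the formula in Theorem~\ref{thm-curv-2}, whose right-hand side collapses almost entirely once the nearly K\"ahler hypotheses are imposed. First I would record the two facts that do all the work. Since a nearly K\"ahler manifold is in particular quasi K\"ahler, Lemma~\ref{lem-criterion-nearly-kahler} gives $\tau_{ij}^k=0$, and conjugating this yields $\tau_{\bi\bj}^\bk=0$ as well. Moreover Theorem~\ref{thm-para-tor} (Kirichenko) asserts that $\nabla\tau=0$, so that by (5) of Corollary~\ref{cor-bian-quasi-kahler} one has $R_{i\bj kl}=-\tau_{kl;\bj}^\bi=0$. These are the only nontrivial inputs.

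Next I would substitute into the identity of Theorem~\ref{thm-curv-2}. Every torsion product appearing in the last three groups of terms carries a factor of the shape $\tau_{j\lam}^l$, $\tau_{ik}^\lam$, $\tau_{\bl\bla}^\bi$, $\tau_{\bl\bla}^\bj$, or $\tau_{ij}^\bla\tau_{\bl\bla}^\bk$; each of these is either $\tau_{ij}^k$ or its conjugate $\tau_{\bi\bj}^\bk$ and hence vanishes. Likewise $\tau_{ij;k}^l=0$, since $\tau_{ij}^l\equiv 0$ as a tensor. This leaves only the curvature term $\frac{1}{2}\cs{R_{k\bl ij}-R_{i\bl jk}-R_{j\bl ki}}$. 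Each of the three components here has index pattern holomorphic-antiholomorphic-holomorphic-holomorphic, i.e. it is of the type $R_{i\bj kl}$, and therefore vanishes by the identity $R_{i\bj kl}=0$ recorded above. Hence $R^L_{ijk\bl}=0$.

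A slicker equivalent route is to invoke Corollary~\ref{cor-quasi-kahler-2} directly: since nearly K\"ahler implies quasi K\"ahler, that corollary already gives $R^L_{ijk\bl}=R_{k\bl ij}$, and the parallelity of the torsion kills the right-hand side through $R_{i\bj kl}=0$. Either way, the only genuine content beyond bookkeeping of index types is Kirichenko's theorem $\nabla\tau=0$, which is precisely what forces the surviving $(2,0)$-curvature component $R_{i\bj kl}$ to vanish. I expect this to be the crux of the argument, with everything else being formal simplification of the terms that are manifestly torsion products of the pure type that the quasi K\"ahler condition annihilates.
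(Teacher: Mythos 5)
Your proposal is correct and is essentially the paper's own argument: the paper proves this corollary precisely by your ``slicker route,'' namely quoting Corollary~\ref{cor-quasi-kahler-2} to get $R^L_{ijk\bl}=R_{k\bl ij}$ and then killing the right-hand side via (5) of Corollary~\ref{cor-bian-quasi-kahler} together with Kirichenko's theorem $\nabla\tau=0$ (Theorem~\ref{thm-para-tor}), which gives $R_{k\bl ij}=-\tau_{ij;\bl}^\bk=0$. Your first, term-by-term substitution into Theorem~\ref{thm-curv-2} is just an unfolding of the same computation, since Corollary~\ref{cor-quasi-kahler-2} was itself derived from that theorem.
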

\begin{thm}
Let $(M,J,g)$ be an almost Hermitian manifold and fix a local unitary $(1,0)$-frame. Then,
\begin{equation}
\begin{split}
R^L_{ijkl}=&\frac{1}{2}\cs{\tau_{kl;i}^\bj-\tau_{kl;j}^\bi}+\frac{1}{2}\cs{\tau_{ij;k}^\bl-\tau_{ij;l}^\bk}+\frac{1}{2}\cs{\tau_{ij}^\lam\tau_{kl}^\bla+\tau_{ij}^\bla\tau_{kl}^\lam}+\frac{1}{4}\tau_{ik}^\lam\cs{\tau_{jl}^\bla-\tau_{ l\lam }^\bj-\tau_{\lam j}^\bl}+\frac{1}{4}\tau_{jl}^\lam\cs{\tau_{ik}^\bla-\tau_{k\lam}^\bi-\tau_{\lam i}^\bk}\\
&-\frac{1}{4}\tau_{il}^\lam\cs{\tau_{jk}^\bla-\tau_{ k\lam }^\bj-\tau_{\lam j}^\bk}-\frac{1}{4}\tau_{jk}^\lam\cs{\tau_{il}^\bla-\tau_{l\lam}^\bi-\tau_{\lam i}^\bl}.\\
\end{split}
\end{equation}
\begin{proof}
We proceed similarly as before. Let $p\in M$ and $(e_1,e_2,\cdots,e_n)$ the local $(1,0)$-frame at $p$ in the proof of Theorem \ref{thm-curv-1}. Then
\begin{equation}
\begin{split}
&\vv<D_{e_k}D_{e_l}e_i,e_j>(p)\\
=&\vv<\na_{e_k}D_{e_l}e_i,e_j>+\frac{1}{2}\cs{\vv<\tau(D_{e_l}e_i,e_k),e_j>+\vv<\tau(e_k,e_j),D_{e_l}e_i>-\vv<\tau(e_j,D_{{e_l}}e_i),e_k>}\\
=&e_k\vv<D_{e_l}e_i,e_j>+\frac{1}{4}\tau_{il}^\lam\cs{\tau_{\lam k}^\bj+\tau_{kj}^\bla-\tau_{j\lam}^\bk}+\frac{1}{4}\tau_{kj}^\lam\cs{\tau_{il}^\bla+\tau_{l\lam}^\bi-\tau_{\lam i}^\bl}\\
=&e_k\cs{\vv<\na_{e_l}e_i,e_j>+\frac{1}{2}\cs{\vv<\tau(e_i,e_l),e_j>+\vv<\tau(e_l,e_j),e_i>-\vv<\tau(e_j,e_i),e_l>}}\\
&+\frac{1}{4}\tau_{il}^\lam\cs{\tau_{\lam k}^\bj+\tau_{kj}^\bla-\tau_{j\lam}^\bk}+\frac{1}{4}\tau_{kj}^\lam\cs{\tau_{il}^\bla+\tau_{l\lam}^\bi-\tau_{\lam i}^\bl}\\
=&\frac{1}{2}\cs{\tau_{il;k}^\bj+\tau_{lj;k}^\bi-\tau_{ji;k}^\bl}-\frac{1}{4}\tau_{il}^\lam\cs{\tau_{jk}^\bla+\tau_{ k\lam }^\bj-\tau_{\lam j}^\bk}-\frac{1}{4}\tau_{jk}^\lam\cs{\tau_{il}^\bla+\tau_{l\lam}^\bi-\tau_{\lam i}^\bl}\\
\end{split}
\end{equation}
where we have used \eqref{eqn-com-con-2}. Similarly,
\begin{equation}
\begin{split}
&\vv<D_{e_l}D_{e_k}e_i,e_j>(p)=\frac{1}{2}\cs{\tau_{ik;l}^\bj+\tau_{kj;l}^\bi-\tau_{ji;l}^\bk}-\frac{1}{4}\tau_{ik}^\lam\cs{\tau_{jl}^\bla+\tau_{ l\lam }^\bj-\tau_{\lam j}^\bl}-\frac{1}{4}\tau_{jl}^\lam\cs{\tau_{ik}^\bla+\tau_{k\lam}^\bi-\tau_{\lam i}^\bk}.\\
\end{split}
\end{equation}
Moreover, note that $$[e_k,e_l](p)=\na_{e_k}e_l(p)-\na_{e_l}e_k(p)-\tau(e_k,e_l)(p)=-\tau_{kl}^\lam e_\lam-\tau_{kl}^\bla e_\bla.$$
So
\begin{equation}
\begin{split}
\vv<D_{[e_k,e_l]}e_i,e_j>(p)=&-\tau_{kl}^\lam\vv<D_{e_\lam}e_i,e_j>-\tau_{kl}^\bla\vv<D_{\ol{e_\lam}}e_i,e_j>\\
=&-\frac{1}{2}\tau_{kl}^\lam\cs{\tau_{i\lam}^\bj+\tau_{\lam j}^\bi-\tau_{ji}^\bla}-\frac{1}{2}\tau_{ij}^\lam\tau_{kl}^\bla\\
=&\frac{1}{2}\cs{\tau_{j\lam}^\bi-\tau_{i\lam}^\bj}\tau_{kl}^\lam-\frac{1}{2}\cs{\tau_{ij}^\lam\tau_{kl}^\bla+\tau_{ij}^\bla\tau_{kl}^\lam}
\end{split}
\end{equation}
where we have used \eqref{eqn-com-con-1} and \eqref{eqn-com-con-2}.

Hence
\begin{equation}
\begin{split}
&R^L_{ijkl}(p)\\
=&\vv<D_{e_k}D_{e_l}e_i-D_{e_l}D_{e_k}e_i-D_{[e_k,e_l]}e_i,e_j>(p)\\
=&\frac{1}{2}\cs{\tau_{il;k}^\bj-\tau_{ik;l}^\bj}+\frac{1}{2}\cs{\tau_{lj;k}^\bi-\tau_{kj;l}^\bi}+\frac{1}{2}\cs{\tau_{ij;k}^\bl-\tau_{ij;l}^\bk}\\
&+\frac{1}{4}\tau_{ik}^\lam\cs{\tau_{jl}^\bla+\tau_{ l\lam }^\bj-\tau_{\lam j}^\bl}+\frac{1}{4}\tau_{jl}^\lam\cs{\tau_{ik}^\bla+\tau_{k\lam}^\bi-\tau_{\lam i}^\bk}\\
&-\frac{1}{4}\tau_{il}^\lam\cs{\tau_{jk}^\bla+\tau_{ k\lam }^\bj-\tau_{\lam j}^\bk}-\frac{1}{4}\tau_{jk}^\lam\cs{\tau_{il}^\bla+\tau_{l\lam}^\bi-\tau_{\lam i}^\bl}-\frac{1}{2}\cs{\tau_{j\lam}^\bi-\tau_{i\lam}^\bj}\tau_{kl}^\lam+\frac{1}{2}\cs{\tau_{ij}^\lam\tau_{kl}^\bla+\tau_{ij}^\bla\tau_{kl}^\lam}\\
=&\frac{1}{2}\cs{\tau_{kl;i}^\bj-\cs{\tau_{i\lam}^\bj\tau_{kl}^\lam+\tau_{k\lam}^\bj\tau_{li}^\lam+\tau_{l\lam}^\bj\tau_{ik}^\lam}}+\frac{1}{2}\cs{\tau_{lk;j}^\bi-\cs{\tau_{l\lam}^\bi\tau_{kj}^\lam+\tau_{k\lam}^\bi\tau_{jl}^\lam+\tau_{j\lam}^\bi\tau_{lk}^\lam}}\\
&+\frac{1}{2}\cs{\tau_{ij;k}^\bl-\tau_{ij;l}^\bk}+\frac{1}{4}\tau_{ik}^\lam\cs{\tau_{jl}^\bla+\tau_{ l\lam }^\bj-\tau_{\lam j}^\bl}+\frac{1}{4}\tau_{jl}^\lam\cs{\tau_{ik}^\bla+\tau_{k\lam}^\bi-\tau_{\lam i}^\bk}\\
&-\frac{1}{4}\tau_{il}^\lam\cs{\tau_{jk}^\bla+\tau_{ k\lam }^\bj-\tau_{\lam j}^\bk}-\frac{1}{4}\tau_{jk}^\lam\cs{\tau_{il}^\bla+\tau_{l\lam}^\bi-\tau_{\lam i}^\bl}-\frac{1}{2}\cs{\tau_{j\lam}^\bi-\tau_{i\lam}^\bj}\tau_{kl}^\lam+\frac{1}{2}\cs{\tau_{ij}^\lam\tau_{kl}^\bla+\tau_{ij}^\bla\tau_{kl}^\lam}\\
=&\frac{1}{2}\cs{\tau_{kl;i}^\bj-\tau_{kl;j}^\bi}+\frac{1}{2}\cs{\tau_{ij;k}^\bl-\tau_{ij;l}^\bk}+\frac{1}{2}\cs{\tau_{ij}^\lam\tau_{kl}^\bla+\tau_{ij}^\bla\tau_{kl}^\lam}\\
&+\frac{1}{4}\tau_{ik}^\lam\cs{\tau_{jl}^\bla-\tau_{ l\lam }^\bj-\tau_{\lam j}^\bl}+\frac{1}{4}\tau_{jl}^\lam\cs{\tau_{ik}^\bla-\tau_{k\lam}^\bi-\tau_{\lam i}^\bk}-\frac{1}{4}\tau_{il}^\lam\cs{\tau_{jk}^\bla-\tau_{ k\lam }^\bj-\tau_{\lam j}^\bk}-\frac{1}{4}\tau_{jk}^\lam\cs{\tau_{il}^\bla-\tau_{l\lam}^\bi-\tau_{\lam i}^\bl}\\
\end{split}
\end{equation}
where we have used (1) in Corollary \ref{cor-Bian-hermitian}.
\end{proof}
\end{thm}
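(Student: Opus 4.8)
The plan is to compute $R^L_{ijkl}$ directly from the definition $R^L(X,Y)Z = D_XD_YZ - D_YD_XZ - D_{[X,Y]}Z$, proceeding exactly as in the proofs of Theorems \ref{thm-curv-1} and \ref{thm-curv-2}. I would fix $p\in M$ and choose a local pseudo holomorphic normal frame $(e_1,\dots,e_n)$ at $p$, so that $\na e_i(p)=0$, $g_{i\bj}(p)=\delta_{ij}$ and $[e_k,\ol{e_l}](p)=0$. Taking $X=e_k$, $Y=e_l$, $Z=e_i$ and pairing against $e_j$ gives $R^L_{ijkl}(p)=\vv<D_{e_k}D_{e_l}e_i - D_{e_l}D_{e_k}e_i - D_{[e_k,e_l]}e_i,e_j>(p)$. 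Note that, unlike the mixed brackets, $[e_k,e_l]$ need not vanish at $p$, so the third term genuinely contributes and must be handled separately.

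Each of the three terms is expanded via the comparison Lemma \ref{lem-comp-connection} together with the pointwise formulas \eqref{eqn-com-con-1} and \eqref{eqn-com-con-2} for $D_{\ol{e_l}}e_i$ and $D_{e_k}e_i$. For the first term I would write $D_{e_l}e_i$ using \eqref{eqn-com-con-2}, apply Lemma \ref{lem-comp-connection} to $D_{e_k}(\cdot)$, and project onto $e_j$; the derivative $e_k\vv<D_{e_l}e_i,e_j>$ collapses, thanks to $\na e(p)=0$, to a single covariant derivative of torsion of the form $\frac{1}{2}\cs{\tau_{il;k}^\bj+\tau_{lj;k}^\bi-\tau_{ji;k}^\bl}$, while the correction terms of Lemma \ref{lem-comp-connection} supply the quadratic torsion monomials. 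The second term is the same computation with $k$ and $l$ interchanged. For the bracket term I would use $[e_k,e_l](p)=-\tau_{kl}^\lam e_\lam-\tau_{kl}^\bla\ol{e_\lam}$, which follows from the definition of $\tau$ and $\na e(p)=0$, and expand $D_{e_\lam}e_i$ and $D_{\ol{e_\lam}}e_i$ by \eqref{eqn-com-con-1} and \eqref{eqn-com-con-2}; this produces the purely quadratic contribution $\frac{1}{2}\cs{\tau_{j\lam}^\bi-\tau_{i\lam}^\bj}\tau_{kl}^\lam-\frac{1}{2}\cs{\tau_{ij}^\lam\tau_{kl}^\bla+\tau_{ij}^\bla\tau_{kl}^\lam}$.

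After summing the three pieces, the first-order part is the antisymmetrization $\frac{1}{2}\cs{\tau_{il;k}^\bj-\tau_{ik;l}^\bj}+\frac{1}{2}\cs{\tau_{lj;k}^\bi-\tau_{kj;l}^\bi}+\frac{1}{2}\cs{\tau_{ij;k}^\bl-\tau_{ij;l}^\bk}$. The final simplification is to rewrite the first two of these in the ``divergence'' form of the statement using the first Bianchi identity (1) of Corollary \ref{cor-first-bian-2}: skew-symmetry of $\tau$ in its lower indices turns $\tau_{il;k}^\bj-\tau_{ik;l}^\bj$ into $-\tau_{li;k}^\bj-\tau_{ik;l}^\bj$, and the Bianchi identity rewrites this as $\tau_{kl;i}^\bj-\cs{\tau_{i\lam}^\bj\tau_{kl}^\lam+\tau_{k\lam}^\bj\tau_{li}^\lam+\tau_{l\lam}^\bj\tau_{ik}^\lam}$, with the analogous manipulation for the $\bi$-indexed pair. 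Collecting the surviving terms and cancelling the quadratic monomials thus produced against those coming from the bracket term yields the asserted formula.

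I expect the main obstacle to be purely organizational: keeping track of the large number of quadratic torsion monomials and their signs, and in particular making sure the contributions arising from the $(0,1)$-part of $[e_k,e_l]$ and from the skew correction of Lemma \ref{lem-comp-connection} are combined correctly \emph{before} the Bianchi identity is invoked. There is no conceptual difficulty beyond what is already present in Theorems \ref{thm-curv-1} and \ref{thm-curv-2}; the bookkeeping is merely heavier here because all four free indices are of type $(1,0)$, so every structural coefficient in \eqref{eqn-com-con-1}--\eqref{eqn-com-con-2} contributes.
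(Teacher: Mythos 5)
Your proposal is correct and follows essentially the same route as the paper's own proof: the same local pseudo holomorphic normal frame at $p$, the same three-term expansion of $R^L_{ijkl}$ with the bracket contribution $[e_k,e_l](p)=-\tau_{kl}^\lam e_\lam-\tau_{kl}^\bla\ol{e_\lam}$ handled separately, and the same final conversion of $\tau_{il;k}^\bj-\tau_{ik;l}^\bj$ (and its conjugate-indexed analogue) into the stated form via the first Bianchi identity with quadratic corrections. Your citation of (1) in Corollary \ref{cor-first-bian-2} for that last step is in fact the accurate reference; the paper's pointer to Corollary \ref{cor-Bian-hermitian} at the corresponding point is evidently a typo, since that corollary concerns Hermitian manifolds and its item (1) involves curvature rather than the pure torsion identity actually used.
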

As before, using properties of Hermitian manifolds and quasi K\"ahler manifolds, we have the following corollaries.
\begin{cor}\label{cor-hermitian-3}
Let $(M,g,J)$ be a Hermitian manifold and fix a local unitary $(1,0)$-frame. Then, $R^L_{ijkl}=0$.
\end{cor}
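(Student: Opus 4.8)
The plan is to obtain the vanishing directly from the formula for $R^L_{ijkl}$ just established, once the integrability hypothesis is fed in. Since $(M,g,J)$ is Hermitian, the almost complex structure $J$ is integrable, so the Nijenhuis tensor vanishes identically; by Lemma \ref{lem-Nijenhui-torsion} this is precisely the statement that $\tau_{ij}^\bk=0$ for all $i,j,k$. Thus on a Hermitian manifold every torsion component carrying an upper barred index vanishes on all of $M$, whereas the components $\tau_{ij}^k$ need not.

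I would then substitute $\tau_{ij}^\bk\equiv 0$ into the displayed identity of the preceding theorem and examine each group of terms in turn. The two covariant-derivative brackets $\frac{1}{2}(\tau_{kl;i}^\bj-\tau_{kl;j}^\bi)$ and $\frac{1}{2}(\tau_{ij;k}^\bl-\tau_{ij;l}^\bk)$ involve derivatives of components with an upper barred index; since the canonical connection satisfies $\nabla J=0$ and therefore preserves the type decomposition, the covariant derivative of the identically-zero component $\tau_{kl}^\bj$ is again zero, and both brackets vanish. The purely algebraic bracket $\frac{1}{2}(\tau_{ij}^\lam\tau_{kl}^\bla+\tau_{ij}^\bla\tau_{kl}^\lam)$ has, in each of its two summands, a factor with an upper barred index (namely $\tau_{kl}^\bla$ and $\tau_{ij}^\bla$), and hence also vanishes.

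For the four remaining quadratic terms I would observe that each is a factor $\tau_{\cdot\cdot}^\lam$ multiplied by a parenthesized sum every entry of which bears an upper barred index, as in $\tau_{jl}^\bla-\tau_{l\lam}^\bj-\tau_{\lam j}^\bl$; under $\tau_{ij}^\bk\equiv 0$ each such parenthesis is identically zero, killing the term. Since every summand of the theorem's formula is then accounted for, we conclude $R^L_{ijkl}=0$.

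The computation presents no genuine obstacle; the only point deserving care is the vanishing of the derivative terms, where one should invoke $\nabla J=0$ to argue abstractly that differentiating a torsion component of pure type cannot create a component of the complementary type, rather than attempting to expand the covariant derivative in frame coefficients.
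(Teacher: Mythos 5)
Your proof is correct and coincides with the paper's own (implicit) argument: the paper's ``as before, using properties of Hermitian manifolds'' means precisely this substitution of $\tau_{ij}^{\bar k}=0$ (from integrability via Lemma \ref{lem-Nijenhui-torsion}) into the formula of the preceding theorem, with the derivative terms killed because $\nabla J=0$ makes the type decomposition parallel, so covariant differentiation of the identically vanishing components $\tau_{kl}^{\bar j}$ again gives zero. One small caution on wording: it is not literally true that \emph{every} torsion component with a barred upper index vanishes on a Hermitian manifold (e.g.\ $\tau_{\bar i\bar j}^{\bar k}=\overline{\tau_{ij}^{k}}$ need not), but every component actually occurring in the formula has two unbarred lower indices and a barred upper index, so your argument goes through unchanged.
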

\begin{cor}\label{cor-quasi-kahler-3}
Let $(M,J,g)$ be a quasi K\"ahler manifold and fix a local unitary $(1,0)$-frame. Then
\begin{equation}
R^L_{ijkl}=\frac{1}{2}\cs{\tau_{kl;i}^\bj-\tau_{kl;j}^\bi}+\frac{1}{2}\cs{\tau_{ij;k}^\bl-\tau_{ij;l}^\bk}.
\end{equation}
\end{cor}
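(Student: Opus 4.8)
The plan is to obtain Corollary \ref{cor-quasi-kahler-3} as an immediate specialization of the general formula for $R^L_{ijkl}$ established in the preceding theorem, invoking only the defining algebraic constraint of quasi K\"ahler geometry. Recall from the discussion before Corollary \ref{cor-bian-quasi-kahler} that $(M,J,g)$ is quasi K\"ahler precisely when $\tau_{ij}^k=0$ for all $i,j,k$; that is, the $(1,0)$-valued $(2,0)$-form component of the torsion (two holomorphic lower indices, one holomorphic upper index) vanishes identically on $M$. So the entire task reduces to tracking where this component enters the general identity.

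First I would copy down the right-hand side of the general theorem for $R^L_{ijkl}$ and sort it into two groups: the four covariant-derivative terms $\frac12\cs{\tau_{kl;i}^\bj-\tau_{kl;j}^\bi}+\frac12\cs{\tau_{ij;k}^\bl-\tau_{ij;l}^\bk}$, and the six quadratic-in-torsion terms. The key observation is that each quadratic term carries an explicit leading factor of exactly the vanishing type $\tau_{**}^\lam$. Concretely, the factors are $\tau_{ij}^\lam$ and $\tau_{kl}^\lam$ in the term $\frac12\cs{\tau_{ij}^\lam\tau_{kl}^\bla+\tau_{ij}^\bla\tau_{kl}^\lam}$, and $\tau_{ik}^\lam,\ \tau_{jl}^\lam,\ \tau_{il}^\lam,\ \tau_{jk}^\lam$ in the four remaining products $\frac14\tau_{ik}^\lam\cs{\cdots}$, $\frac14\tau_{jl}^\lam\cs{\cdots}$, $-\frac14\tau_{il}^\lam\cs{\cdots}$, $-\frac14\tau_{jk}^\lam\cs{\cdots}$. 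Under the quasi K\"ahler hypothesis every one of these leading factors is zero (the bracketed factors do not matter), so all six quadratic terms vanish identically, leaving exactly the claimed four-term expression.

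The one point that deserves a sentence of care is to \emph{not} over-kill: the surviving terms are covariant derivatives of the components $\tau_{kl}^\bj$ and $\tau_{ij}^\bl$, whose upper indices are antiholomorphic. By Lemma \ref{lem-Nijenhui-torsion} these are precisely the Nijenhuis-type components $\tau_{ij}^\bk=\tfrac14 N_{ij}^\bk$ that quasi K\"ahler geometry does \emph{not} constrain, so they and their derivatives persist. There is essentially no analytic obstacle here; the whole content is the bookkeeping check that each of the six quadratic terms contains a $\tau_{**}^\lam$ factor, and that is the only place where a sign or index slip could arise, so I would simply verify it term by term.
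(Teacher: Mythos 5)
Your proposal is correct and takes essentially the same route as the paper: the paper obtains this corollary by directly specializing the general theorem for $R^L_{ijkl}$ via the quasi K\"ahler condition $\tau_{ij}^k=0$, which is exactly your observation that each of the six quadratic terms carries a leading factor of the vanishing type $\tau_{**}^\lam$ while the four covariant-derivative terms $\tau_{kl;i}^\bj$, $\tau_{kl;j}^\bi$, $\tau_{ij;k}^\bl$, $\tau_{ij;l}^\bk$ survive untouched.
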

By Theorem \ref{thm-para-tor}, we have the following corollary for nearly K\"ahler manifolds.
\begin{cor}\label{cor-nearly-kahler-3}
Let $(M,J,g)$ be a nearly K\"ahler manifold and fix a local unitary $(1,0)$-frame. Then $R^L_{ijkl}=0$.
\end{cor}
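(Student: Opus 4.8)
The plan is to deduce this directly from the general formula for $R^L_{ijkl}$ just established, by first specializing to the quasi K\"ahler setting and then invoking the parallelism of the torsion. Since a nearly K\"ahler manifold satisfies $\tau_{ij}^k=0$ by Lemma \ref{lem-criterion-nearly-kahler}, it is in particular quasi K\"ahler, so Corollary \ref{cor-quasi-kahler-3} is available and yields
\begin{equation}
R^L_{ijkl}=\frac{1}{2}\cs{\tau_{kl;i}^\bj-\tau_{kl;j}^\bi}+\frac{1}{2}\cs{\tau_{ij;k}^\bl-\tau_{ij;l}^\bk}.
\end{equation}
In this way the whole expression collapses onto first covariant derivatives of the $(0,1)$-part of the torsion, and every purely algebraic quadratic term has already been eliminated by the vanishing of the $(1,0)$-part.

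Next I would invoke Theorem \ref{thm-para-tor} (Kirichenko), which asserts $\nabla\tau=0$ on any nearly K\"ahler manifold. In components this says $\tau_{AB;C}^{D}=0$ for all indices; in particular $\tau_{kl;i}^\bj=\tau_{kl;j}^\bi=\tau_{ij;k}^\bl=\tau_{ij;l}^\bk=0$. Substituting these vanishings into the displayed identity makes each of the four remaining terms drop out, so $R^L_{ijkl}=0$, as claimed.

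Alternatively, one could begin from the unspecialized formula of the preceding theorem: there the covariant-derivative terms again vanish by parallel torsion, while every one of the quadratic terms carries a factor of the $(1,0)$-part $\tau_{ab}^{c}$, which is zero by Lemma \ref{lem-criterion-nearly-kahler}; both routes reach the same conclusion. I expect there to be essentially no obstacle here, since the substantive work is already carried out in the general curvature formula and in the parallelism of the torsion for nearly K\"ahler manifolds. The only point meriting attention is the observation at the outset that nearly K\"ahler manifolds are quasi K\"ahler, so that the simplified identity of Corollary \ref{cor-quasi-kahler-3} may be applied and the proof reduces to citing $\nabla\tau=0$.
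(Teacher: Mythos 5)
Your proposal is correct and is essentially the paper's own argument: the paper derives this corollary exactly by noting that a nearly K\"ahler manifold is quasi K\"ahler (so Corollary \ref{cor-quasi-kahler-3} applies) and then citing Theorem \ref{thm-para-tor} to kill all the covariant-derivative terms. Your alternative route through the unspecialized formula is also fine but adds nothing beyond the same two ingredients.
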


At the end of this section, we compute the Ricci curvature of the Levi-Civita connection of an almost K\"ahler manifold and a nearly K\"ahler manifold in terms of curvature and torsion for the canonical connection.
\begin{thm}\label{thm-almost-kahler-ricci}
Let $(M,J,g)$ be an almost K\"ahler manifold and fixed a local unitary $(1,0)$-frame. Then,
\begin{equation}
R^{L}_{ij}=R_{i\bla\lam j}+R_{j\bla \lam i}\ \mbox{and}\ R^L_{i\bj}=R'_{i\bj}-2\tau_{i\mu}^\bla\tau_{\bj\bla}^\mu.
\end{equation}
\end{thm}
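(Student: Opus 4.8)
The plan is to recover both components of the Levi-Civita Ricci tensor by tracing the full Riemannian curvature tensor $R^L_{ABCD}$ against the inverse metric in the fixed unitary frame, and then to feed in the ``pure type'' curvature expressions already established. Since in a unitary frame the inverse metric pairs only a barred index with an unbarred one, the Ricci contractions split as
\begin{equation}
R^L_{ij}=\sum_\mu R^L_{i\bmu\mu j}+\sum_\mu R^L_{i\mu\bmu j},\qquad R^L_{i\bj}=\sum_\mu R^L_{i\bmu\mu\bj}+\sum_\mu R^L_{i\mu\bmu\bj}.
\end{equation}
The first task is, for each summand, to use the Riemannian symmetries of $R^L$ (the two antisymmetries, the pair symmetry $R^L_{ABCD}=R^L_{CDAB}$, and, where needed, the first Bianchi identity) to bring it into one of the shapes $R^L_{i\bj k\bl}$ or $R^L_{ijk\bl}$ computed in Theorem \ref{thm-curv-1} and Theorem \ref{thm-curv-2}.

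For $R^L_{ij}$ I would move every summand into the $R^L_{ijk\bl}$ shape by the pair symmetry and an antisymmetry, and then invoke Corollary \ref{cor-quasi-kahler-2} (valid here since almost Kähler forces $\tau_{ij}^k=0$), which reads $R^L_{ijk\bl}=R_{k\bl ij}$. Because this identity carries no torsion correction, the two sums collapse directly, after one use of the antisymmetry $R_{ABCD}=-R_{ABDC}$ of the canonical curvature, to $R_{i\bla\lam j}+R_{j\bla\lam i}$, which is the first formula with no leftover torsion.

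For $R^L_{i\bj}$ the first summand $\sum_\mu R^L_{i\bmu\mu\bj}$ is already of type $R^L_{i\bj k\bl}$, while the second summand $\sum_\mu R^L_{i\mu\bmu\bj}$ I would rewrite, using the first Bianchi identity together with the symmetries of $R^L$, as $\sum_\mu\cs{R^L_{i\bmu\mu\bj}-R^L_{\mu\bmu i\bj}}$, so that the whole contraction becomes $2\sum_\mu R^L_{i\bmu\mu\bj}-\sum_\mu R^L_{\mu\bmu i\bj}$, again entirely of type $R^L_{i\bj k\bl}$. I would then apply the almost Kähler form $R^L_{i\bj k\bl}=R_{i\bj k\bl}+\tau_{\lam i}^\bk\tau_{\bla\bj}^l$ of Corollary \ref{cor-almost-kahler-1}. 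This replaces the Levi-Civita pieces by canonical curvatures $\sum_\mu R_{i\bmu\mu\bj}$ and $\sum_\mu R_{\mu\bmu i\bj}$ together with several torsion-squared terms. The second canonical sum is by definition the first Ricci $R'_{i\bj}$, and the cross contraction $\sum_\mu R_{i\bmu\mu\bj}$ I would convert into $R'_{i\bj}$ by the Bianchi identity (2) of Corollary \ref{cor-bian-quasi-kahler}, which interchanges the two unbarred indices at the cost of exactly $-\tau_{i\mu}^\bla\tau_{\bj\bla}^\mu$; this is the origin of the target torsion correction.

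The remaining and genuinely delicate step is to show that all the extra torsion-squared contributions produced by Corollary \ref{cor-almost-kahler-1} cancel, leaving only $-2\tau_{i\mu}^\bla\tau_{\bj\bla}^\mu$. After collecting them, this amounts to the identity $2\tau_{\lam i}^\bmu\tau_{\bla\bmu}^j=\tau_{\lam\mu}^\bi\tau_{\bla\bmu}^j$ (summed over $\lam$ and $\mu$), which fails termwise and must be forced by the almost Kähler constraint $\tau_{ij}^\bk+\tau_{jk}^\bi+\tau_{ki}^\bj=0$. The plan is to expand the factor $\tau_{\lam\mu}^\bi$ by this cyclic relation and relabel the summation indices, and then to use the antisymmetry of $\tau_{ij}^\bk$ in its lower indices together with the antisymmetry of the contracting factor $\tau_{\bla\bmu}^j$ in $\lam,\mu$; the mismatched terms then pair up and cancel. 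This torsion bookkeeping, rather than any conceptual difficulty, is where I expect the real work to lie.
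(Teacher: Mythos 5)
Your proposal is correct and follows essentially the same route as the paper's proof: trace $R^L$ in a unitary frame, use the (torsion-free) first Bianchi identity to reduce the mixed trace to terms of type $R^L_{i\bj k\bl}$, substitute via Corollaries \ref{cor-quasi-kahler-2} and \ref{cor-almost-kahler-1}, and convert $\sum_\mu R_{i\bmu\mu\bj}$ into $R'_{i\bj}$ through the quasi-K\"ahler Bianchi identities of Corollary \ref{cor-bian-quasi-kahler}. The torsion identity $2\tau_{\lam i}^\bmu\tau_{\bla\bmu}^j=\tau_{\lam\mu}^\bi\tau_{\bla\bmu}^j$ you isolate is exactly the cancellation the paper carries out inline using the almost K\"ahler cyclic relation $\tau_{ij}^\bk+\tau_{jk}^\bi+\tau_{ki}^\bj=0$ together with relabeling and the lower-index antisymmetries, so your bookkeeping matches the paper's final lines.
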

\begin{proof}
\begin{equation}
\begin{split}
R^L_{ij}=R^L_{\lam ij\bla}+R^L_{\bla ij\lam}=R^L_{\lam ij\bla}+R^L_{\lam ji\bla}=R_{j\bla \lam i}+R_{i\bla \lam j}\\
\end{split}
\end{equation}
where we have used the symmetries for the curvature tensor of Levi-Civita connection and Corollary \ref{cor-quasi-kahler-2}.
Moreover
\begin{equation}
\begin{split}
R^L_{i\bj}=&R^L_{\lam i\bj\bla}+R^L_{\bla i\bj\lam}\\
=&-R^L_{\lam \bj\bla i}-R^L_{\lam\bla i\bj}+R^L_{\bla i\bj\lam}\\
=&2R^L_{\lam \bj i\bla}-R^L_{\lam\bla i\bj}\\
=&2R_{\lam\bj i\bla}+2\tau_{\mu\lam}^\bi\tau_{\bmu\bj}^\lam-R'_{i\bj}-\tau_{\mu\lam}^\bi\tau_{\bmu\bla}^j\\
=&2(R_{\lam\bla i\bj}-\tau_{i\mu}^\bla\tau_{\bj\bla}^\mu)-R'_{i\bj}+\tau_{\mu\lam}^\bi\tau_{\bmu\bj}^\lam+\tau_{\mu\lam}^\bi(\tau_{\bmu\bj}^\lam+\tau_{\bla\bmu}^j)\\
=&R'_{i\bj}-2\tau_{i\mu}^\bla\tau_{\bj\bla}^\mu+\tau_{\mu\lam}^\bi\tau_{\bmu\bj}^\lam-\tau_{\mu\lam}^\bi\tau_{\bj\bla}^\mu\\
=&R'_{i\bj}-2\tau_{i\mu}^\bla\tau_{\bj\bla}^\mu+\tau_{\mu\lam}^\bi\tau_{\bmu\bj}^\lam-\tau_{\lam\mu}^\bi\tau_{\bj\bmu}^\lam\\
=&R'_{i\bj}-2\tau_{i\mu}^\bla\tau_{\bj\bla}^\mu
\end{split}
\end{equation}
where we have used Corollary \ref{cor-almost-kahler-1} and Corollary \ref{cor-bian-quasi-kahler}.
\end{proof}
\begin{thm}\label{thm-nearly-kahler-ricci}
Let $(M,J,g)$ be a nearly K\"ahler manifold and fix a local unitary $(1,0)$-frame. Then
\begin{equation}
R^L_{ij}=0\ \mbox{and}\ R^L_{i\bj}=R_{i\bj}+\frac{5}{4}\tau_{i\lam}^\bmu\tau_{\bj\bla}^\mu.
\end{equation}
So,
$$Ric^L(X,X)\geq Ric(X,X)$$
for any real tangent vectors $X$ , where $Ric^L$ means the Ricci curvature tensor of the Levi-Civita connection and $Ric$ means the Ricci curvature tensor of the canonical connection.

\end{thm}
\begin{proof}
The proof is the same as the proof of Theorem \ref{thm-almost-kahler-ricci} using Corollary \ref{cor-nearly-kahler-1} and Corollary \ref{cor-nearly-kahler-2}.
\end{proof}
\section{Integrability of quasi K\"ahler manifolds }
In this section, with the help of the curvature identities derived in the last two sections, we obtain some results about the integrability of quasi K\"ahler manifolds.

First, recall that the $*$-scalar curvature $S^*$ for the Levi-Civita connection of an almost Hermitian manifold is defined as (see for example \cite{AD})
\begin{equation}
S^*=R^L_{\lam\bla\mu\bmu}.
\end{equation}
Let $S^c$ be the scalar curvature of the canonical connection. That is,
\begin{equation}
S^c=R_{\lam\bla\mu\bmu}.
\end{equation}
\begin{thm}
Let $(M,J,g)$ be a quasi K\"ahler manifold. Then
\begin{equation}
S^c\leq S^*
\end{equation}
all over $M$. Moreover, if the equality holds all over $M$, then $R_{i\bj kl}=0$
for all $i,j,k$ and $l$ all over $M$. If the manifold is almost K\"ahler, then it must be K\"ahler when the equality holds all over $M$.
\end{thm}
\begin{proof}
By Corollary \ref{cor-quasi-kahler-1}, we know that
\begin{equation}
S^*=S^c+\frac{1}{4}\sum_{\lam,\mu,\nu=1}^n\lf|\tau_{\lam\mu}^{\bar\nu}+\tau_{\mu\nu}^\bla-\tau_{\nu\lam}^\bmu\ri|^2\geq S^c.
\end{equation}
When the equality holds all over $M$, we have
\begin{equation}\label{eqn-scalar-1}
\tau_{ij}^\bk+\tau_{jk}^\bi-\tau_{ki}^\bj=0
\end{equation}
for all $i,j$ and $k$, all over $M$. Then
\begin{equation}
\tau_{ij;\bl}^\bk+\tau_{jk;\bl}^\bi-\tau_{ki;\bl}^\bi=0
\end{equation}
which means that
\begin{equation}
R_{k\bl ij}+R_{i\bl jk}-R_{j\bl ki}=0
\end{equation}
by (5) in Corollary \ref{cor-bian-quasi-kahler}. Combining this with (6) in Corollary \ref{cor-bian-quasi-kahler}, we know that $R_{i\bj kl}=0$ for all $i,j,k$ and $l$.

When the manifold is almost K\"ahler and the equality holds all over $M$. It is clear that $\tau_{ij}^\bk=0$ for all $i,j$ and $k$ by combining \eqref{eqn-scalar-1} and $\tau_{ij}^\bk+\tau_{jk}^\bi+\tau_{ki}^\bj=0$ for all $i,j$ and $k$.
\end{proof}
\begin{rem}
In \cite{AD}, there is a similar inequality in integration form for compact almost K\"ahler manifolds.
\end{rem}

We know that when the complex structure is integrable, then the (2,0)-part of the curvature tensor for the canonical connection vanishes. One may ask if the converse is true. It turns out that the converse is not true even when the manifold is almost K\"ahler. Indeed, there are examples of strictly almost K\"ahler manifolds (almost K\"ahler but not K\"ahler) with vanishing (2,0)-part of the curvature tensor for the canonical connection which is equivalent to that the curvature tensor for the Levi-Civita connection satisfies the third Gray identity by Corollary \ref{cor-quasi-kahler-2}(see for example \cite{AD}). However, when some curvature conditions are imposed, the answer turns out to be affirmative.

\begin{thm}Let $(M,J,g)$ be a compact quasi K\"ahler manifold with quasi positive second Ricci curvature and parallel (2,0)-part of the curvature tensor for the canonical connection. Then, the manifold must be K\"ahler.
\end{thm}
\begin{proof}

Note that for a quasi K\"ahler manifold, the Laplacian opertors on functions for the canonical connection and the Levi-Civita connection coincide (See for example \cite{twy,FTY}). So, we simply denote it as $\Delta$. Fix a local unitary $(1,0)$-frame, we have
\begin{equation}\label{eqn-integrability}
\begin{split}
\Delta(\tau_{ij}^\bk\tau_{\bi\bj}^k)=&(\tau_{ij}^\bk\tau_{\bi\bj}^k)_{l\bl}\\
=&\tau_{ij;l}^\bk\tau_{\bi\bj;\bl}^k+\tau_{ij;\bl}^\bk\tau_{\bi\bj;l}^k+\tau_{ij;l\bl}^\bk\tau_{\bi\bj}^k+\tau_{ij}^\bk\tau_{\bi\bj;l\bl}^k\\
=&\tau_{ij;l}^\bk\tau_{\bi\bj;\bl}^k+\tau_{ij;\bl}^\bk\tau_{\bi\bj;l}^k+(\tau_{ij;\bl l}^\bk+R_{i\bla l\bl}\tau_{\lam j}^\bk+R_{j\bla l\bl}\tau_{i\lam}^\bk+R_{\lam\bk l\bl}\tau_{ij}^\bla)\tau_{\bi\bj}^k+\tau_{ij}^\bk\tau_{\bi\bj;l\bl}^k\\
=&\tau_{ij;l}^\bk\tau_{\bi\bj;\bl}^k+\tau_{ij;\bl}^\bk\tau_{\bi\bj;l}^k+(-R_{k\bl ij;\l}+R''_{i\bla }\tau_{\lam j}^\bk+R''_{j\bla }\tau_{i\lam}^\bk+R''_{\lam\bk }\tau_{ij}^\bla)\tau_{\bi\bj}^k-\tau_{ij}^\bk R_{\bk l \bi\bj;\bl}\\
=&\tau_{ij;l}^\bk\tau_{\bi\bj;\bl}^k+\tau_{ij;\bl}^\bk\tau_{\bi\bj;l}^k+(R''_{i\bla }\tau_{\lam j}^\bk+R''_{j\bla }\tau_{i\lam}^\bk+R''_{\lam\bk }\tau_{ij}^\bla)\tau_{\bi\bj}^k\\
\geq& 0
\end{split}
\end{equation}
where we have used Corollary \ref{cor-bian-quasi-kahler} and the Ricci identity for commuting covariant derivatives with respect to the canonical connection (See for example \cite{FTY}). By maximum principle, we know that $\|\tau\|^2$ is constant and that $\tau$ is parallel. Since the second Ricci curvature is positive at some point,
$\tau$ vanishes at some point. Therefore $\tau$ vanishes all over $M$ and the metric is K\"ahler.
\end{proof}

Furthermore, we have the following integral inequality by taking integration on \eqref{eqn-integrability}.
\begin{thm} Let $(M,J,g)$ be a compact quasi K\"ahler manifold. Then
\begin{equation}
\int_M\sum_{i,j,k,l=1}(R''_{i\bl }\tau_{l j}^\bk+R''_{j\bl }\tau_{il}^\bk+R''_{l\bk }\tau_{ij}^\bl)\tau_{\bi\bj}^kdV\leq \int_M\sum_{i,j,k,l=1}^n|R_{i\bj kl}|^2dV.
\end{equation}
\end{thm}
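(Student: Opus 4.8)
The plan is to integrate over $M$ the same Bochner identity for $\|\tau\|^2=\tau_{ij}^\bk\tau_{\bi\bj}^k$ used in the previous theorem, but to retain the two curvature-derivative terms that were discarded there under the parallel $(2,0)$-assumption. Concretely, I would start from the stage of \eqref{eqn-integrability} immediately after the Ricci identity has been applied, namely
\begin{equation*}
\begin{split}
\Delta(\tau_{ij}^\bk\tau_{\bi\bj}^k)=&\tau_{ij;l}^\bk\tau_{\bi\bj;\bl}^k+\tau_{ij;\bl}^\bk\tau_{\bi\bj;l}^k\\
&+\cs{-R_{k\bl ij;l}+R''_{i\bla}\tau_{\lam j}^\bk+R''_{j\bla}\tau_{i\lam}^\bk+R''_{\lam\bk}\tau_{ij}^\bla}\tau_{\bi\bj}^k-\tau_{ij}^\bk R_{\bk l\bi\bj;\bl}.
\end{split}
\end{equation*}
Integrating this over the compact $M$ kills the left-hand side: since $\tau_{ij}^k=0$ for a quasi K\"ahler manifold, the Laplacian on functions for $\na$ agrees with that for $D$, so $\int_M\Delta(\tau_{ij}^\bk\tau_{\bi\bj}^k)\,dV=0$.

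Next I would read off the two quadratic torsion-derivative terms. The first, $\tau_{ij;l}^\bk\tau_{\bi\bj;\bl}^k=\sum_{i,j,k,l}|\tau_{ij;l}^\bk|^2\ge0$, is a nonnegative ``holomorphic-derivative'' term carrying no curvature. For the second I would invoke (5) of Corollary \ref{cor-bian-quasi-kahler}, which after relabeling reads $\tau_{ij;\bl}^\bk=-R_{k\bl ij}$; conjugating gives $\tau_{\bi\bj;l}^k=-R_{\bk l\bi\bj}$, whence $\tau_{ij;\bl}^\bk\tau_{\bi\bj;l}^k=R_{k\bl ij}R_{\bk l\bi\bj}=\sum_{i,j,k,l}|R_{i\bj kl}|^2$.

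The crux is to dispose of the two curvature-derivative terms by integration by parts. I would regard $U_\bl=R_{k\bl ij}\tau_{\bi\bj}^k$ (summed over $i,j,k$) as a $(0,1)$-form; because $\na J=0$ its contraction $\sum_l U_{\bl;l}$ is genuinely the $\na$-divergence of the dual field, and for a quasi K\"ahler manifold the torsion has vanishing trace (its only nonzero components $\tau_{ij}^\bk,\tau_{\bi\bj}^k$ admit no contraction), so $\int_M\sum_l U_{\bl;l}\,dV=0$ with no torsion correction, exactly as for the scalar Laplacian. This gives $\int_M R_{k\bl ij;l}\tau_{\bi\bj}^k\,dV=-\int_M R_{k\bl ij}\tau_{\bi\bj;l}^k\,dV$, and using $\tau_{\bi\bj;l}^k=-R_{\bk l\bi\bj}$ once more turns $\int_M(-R_{k\bl ij;l}\tau_{\bi\bj}^k)\,dV$ into $-\int_M\sum|R_{i\bj kl}|^2\,dV$. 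Its complex conjugate $\int_M(-\tau_{ij}^\bk R_{\bk l\bi\bj;\bl})\,dV$ contributes an equal amount, so the two curvature-derivative terms together integrate to $-2\int_M\sum|R_{i\bj kl}|^2\,dV$.

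Assembling the pieces, the integrated identity collapses to
\begin{equation*}
\begin{split}
0=&\int_M\sum_{i,j,k,l}|\tau_{ij;l}^\bk|^2\,dV+\int_M\cs{R''_{i\bl}\tau_{lj}^\bk+R''_{j\bl}\tau_{il}^\bk+R''_{l\bk}\tau_{ij}^\bl}\tau_{\bi\bj}^k\,dV\\
&-\int_M\sum_{i,j,k,l}|R_{i\bj kl}|^2\,dV,
\end{split}
\end{equation*}
so the Ricci term equals $\int_M\sum|R_{i\bj kl}|^2\,dV-\int_M\sum|\tau_{ij;l}^\bk|^2\,dV$, which is $\le\int_M\sum|R_{i\bj kl}|^2\,dV$ since the holomorphic-derivative term is nonnegative. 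The step I expect to be most delicate is the integration by parts: one must verify that the canonical connection's torsion trace vanishes on a quasi K\"ahler manifold so that the divergence theorem applies without a correction term, and one must track conjugations and curvature symmetries carefully when passing between torsion derivatives and curvature through Corollary \ref{cor-bian-quasi-kahler}(5).
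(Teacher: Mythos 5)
Your proposal is correct and is essentially the paper's own argument: the paper starts from the same Bochner identity \eqref{eqn-integrability}, rewrites the second-derivative terms as the divergences $(\tau_{ij;\bl}^\bk\tau_{\bi\bj}^k)_l+(\tau_{ij}^\bk\tau_{\bi\bj;l}^k)_\bl$ --- which, via $\tau_{ij;\bl}^\bk=-R_{k\bl ij}$ from (5) of Corollary \ref{cor-bian-quasi-kahler}, is exactly your integration by parts of the curvature-derivative terms --- and likewise discards the nonnegative term $\tau_{ij;l}^\bk\tau_{\bi\bj;\bl}^k$ before applying the divergence theorem. Your justification for the absence of a correction term (the torsion trace of the canonical connection vanishes on a quasi K\"ahler manifold, so its divergence operator coincides with that of the Levi-Civita connection) is precisely the observation the paper draws from Lemma \ref{lem-comp-connection}.
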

\begin{proof}
By Lemma \ref{lem-comp-connection}, it is not hard to check that the divergence operators on vector fields for the canonical connection and the Levi-Civita connection coincide on quasi K\"ahler manifolds. Moreover, by \eqref{eqn-integrability}, we have
\begin{equation}
\begin{split}
\Delta(\tau_{ij}^\bk\tau_{\bi\bj}^k)=&\tau_{ij;l}^\bk\tau_{\bi\bj;\bl}^k+\tau_{ij;\bl}^\bk\tau_{\bi\bj;l}^k+(R''_{i\bla }\tau_{\lam j}^\bk+R''_{j\bla }\tau_{i\lam}^\bk+R''_{\lam\bk }\tau_{ij}^\bla)\tau_{\bi\bj}^k+\tau_{ij;\bl l}^\bk\tau_{\bi\bj}^k+\tau_{ij}^\bk\tau_{\bi\bj;l\bl}^k\\
=&\tau_{ij;l}^\bk\tau_{\bi\bj;\bl}^k-\tau_{ij;\bl}^\bk\tau_{\bi\bj;l}^k+(R''_{i\bla }\tau_{\lam j}^\bk+R''_{j\bla }\tau_{i\lam}^\bk+R''_{\lam\bk }\tau_{ij}^\bla)\tau_{\bi\bj}^k+(\tau_{ij;\bl}^\bk\tau_{\bi\bj}^k)_l+(\tau_{ij}^\bk\tau_{\bi\bj;l}^k)_\bl\\
\geq&-\sum_{i,j,k,l=1}^n|R_{i\bj kl}|^2+(R''_{i\bla }\tau_{\lam j}^\bk+R''_{j\bla }\tau_{i\lam}^\bk+R''_{\lam\bk }\tau_{ij}^\bla)\tau_{\bi\bj}^k+(\tau_{ij;\bl}^\bk\tau_{\bi\bj}^k)_l+(\tau_{ij}^\bk\tau_{\bi\bj;l}^k)_\bl\\
\end{split}
\end{equation}
where we have used Corollary \ref{cor-bian-quasi-kahler}. Taking integration on both sides of the last inequality and applying the divergence theorem, we obtain the conclusion.
\end{proof}
\section{Integrability of nearly K\"ahler manifolds}
In this section, we consider intergrability of nearly K\"ahler manifolds.
\begin{prop}\label{prop-curv-id-nearly-kahler}
Let $(M,J,g)$ be  a nearly K\"ahler manifold and fix a local unitary $(1,0)$-frame. Then
\begin{equation}\label{eqn-id-NK}
\sum_{k,l,\lam,\mu=1}^nR_{i\bj k\bl}\tau_{\bla\bmu}^k\tau_{\lam\mu}^\bl=0
\end{equation}
for all $i$ and $j$.
\end{prop}
\begin{proof}
By Corollary \ref{cor-bian-nearly-kahler}, we know that
\begin{equation}
\sum_{\lam=1}^n\cs{\tau_{kl}^\bla R_{i\bj m\bla}+\tau_{lm}^\bla R_{i\bj k\bla}+\tau_{mk}^\bla R_{i\bj l\bla}}=0
\end{equation}
for all $i,j,k,l$ and $m$. Then
\begin{equation}
\sum_{k,l,m,\lam=1}^n\cs{\tau_{kl}^\bla R_{i\bj m\bla}+\tau_{lm}^\bla R_{i\bj k\bla}+\tau_{mk}^\bla R_{i\bj l\bla}}\tau_{\bk\bl}^m=0.
\end{equation}
By Lemma \ref{lem-criterion-nearly-kahler}, we have
\begin{equation}
\begin{split}
3\sum_{k,l,\lam,\mu=1}^nR_{i\bj k\bl}\tau_{\bla\bmu}^k\tau_{\lam\mu}^\bl=\sum_{k,l,m,\lam=1}\cs{R_{i\bj m\bla}\tau_{kl}^\bla\tau_{\bk\bl}^m+R_{i\bj k\bla}\tau_{lm}^\bla\tau_{\bl\bm}^k+R_{i\bj l\bla}\tau_{mk}^\bla\tau_{\bm\bk}^l}=0.
\end{split}
\end{equation}
Hence
\begin{equation}
\sum_{k,l,\lam,\mu=1}^nR_{i\bj k\bl}\tau_{\bla\bmu}^k\tau_{\lam\mu}^\bl=0.
\end{equation}
\end{proof}
\begin{thm}
Let $(M,J,g)$ be a nearly K\"ahler manifold. Then, if the Ricci curvature of the canonical connection is positive definite or negative definite at some point, then the manifold must be K\"ahler.
\end{thm}
\begin{proof}
By Proposition \ref{prop-curv-id-nearly-kahler}, we have
\begin{equation}
\sum_{k,l,\lam,\mu=1}R_{k\bl}\tau_{\bla\bmu}^k\tau_{\lam\mu}^\bl=0.
\end{equation}
If the $(R_{k\bl})$ is positive or negative at some point $p\in M$, then
\begin{equation}
\tau_{\lam\mu}^\bl(p)=0
\end{equation}
for all $\lam,\mu$ and $l$. Note that $\tau$ is parallel on $M$ by Theorem \ref{thm-para-tor}. So $\tau=0$ all over $M$ and hence $(M,J,g)$ is K\"ahler.
\end{proof}

\begin{thm}
Let $(M^6,J,g)$ be a strictly nearly K\"ahler manifold. Then $R_{i\bj}=0$ for all $i$ and $j$.
\end{thm}
\begin{proof}
Let $\Phi(X,Y,Z)=\vv<\tau(X,Y),Z>$ for any (1,0)-vectors $X,Y$ and $Z$. Then $\Phi$ is a $(3,0)$-form on $M$ by Lemma \ref{lem-criterion-nearly-kahler}. Let $e_1,e_2,e_3$ be a unitary frame and $\omega^1,\omega^2,\omega^3$ be its dual frame. Suppose that $\Phi=c\omega^1\wedge\omega^2\wedge\omega^3$. Since the manifold is non-K\"ahler, $c\neq 0$. Moreover, it is clear that
\begin{equation}
\tau_{ij}^\bk=c\cdot\mbox{sgn}\left(\begin{array}{ccc}1&2&3\\ i&j&k\end{array}\right).
\end{equation}
Substituting it in to \ref{eqn-id-NK}, we have
\begin{equation}
|c|^2R_{i\bj}=0.
\end{equation}
This completes the proof.
\end{proof}
By the last result, we can reproduce the following  well-known result of Gray \cite{Gray2}.
\begin{cor}\label{cor-NK-E}
Let $(M^6,J,g)$ be a strictly nearly K\"ahler manifold. Then, $(M^6,g)$ as a Riemannian manifold must be a Einstein manifold with positive scalar curvature.
\end{cor}
\begin{proof}
Let $\Phi$ and $c$ be the same as the proof of last theorem. Since $\tau$ is parallel by Theorem \ref{thm-para-tor}, $\|\tau\|^2$ is constant. Hence $|c|^2$ does not depend on the point we chosen. By Theorem \ref{thm-nearly-kahler-ricci} and the last theorem, we know that
\begin{equation}
R^L_{ij}=0\ \mbox{and}\ R^L_{i\bj}= \frac{5}{4}|c|^2g_{i\bj}.
\end{equation}
This completes the proof.
\end{proof}

\Acknowledgements{The author would like to thank the referees for helpful and inspiring suggestions. This work was partially supported by GDNSF S2012010010038, the National Natural Science Foundation of
China (11571215) and a supporting project from the Department of Education of Guangdong Province with contract no. Yq2013073.}


\end{document}